\theoremstyle{plain}
\newtheorem{thm}{Theorem}[section]
\newtheorem{lemma}[thm]{Lemma}
\newtheorem{cor}[thm]{Corollary}
\theoremstyle{definition}
\theoremstyle{remark}
\newtheorem{exa}[thm]{Example}
\newtheorem{que}[thm]{Question}
\newcommand{\N}{\mathbb{N}}
\DeclareMathOperator{\Z}{\mathbb{Z}}
\newcommand{\Id}[1]{\mathrm{Id}\langle {#1}\rangle}
\newcommand{\Sub}[1]{\langle {#1}\rangle}
\renewcommand{\sp}[1]{\mathrm{Span}({#1})}
\newcommand{\K}[1]{K\langle{#1}\rangle}
\newcommand{\ZZ}[1]{\mathbb{Z}\langle{#1}\rangle}
\newcommand{\G}[2]{\langle {#1}\rangle_{#2}\, }
\renewcommand{\t}[3]{[{#1},{#2},{#3}]}
\newcommand{\vg}[1]{[{#1}]}
\newcommand{\setsuchthat}{\: :\:}
\newcommand{\id}{\mathrm{id}}
\date{\today}
\thanks{The first author was supported by the National Science Foundation under Grant No. DMS 1500254.}
\keywords{ring, $K$-algebra, finitely presented, finitely generated, subalgebra, free algebra, Reidemeister--Schreier}
\subjclass[2010]{Primary: 16S15}
\begin{document}


\title{Presentations for subrings and subalgebras of finite co-rank}

\author{Peter Mayr} 
\address[Peter Mayr]{Department of Mathematics, CU Boulder, USA}
\email{peter.mayr@colorado.edu}
\author{Nik Ru\v{s}kuc}
\address[Nik Ru{\v{s}}kuc]{School of Mathematics and Statistics, University of St Andrews, St Andrews, Scotland, UK}
\email{nik.ruskuc@st-andrews.ac.uk}

\begin{abstract}
Let $K$ be a commutative  Noetherian ring with identity, let $A$ be a $K$-algebra, and let $B$ be a subalgebra of $A$
 such that $A/B$ is finitely generated as a $K$-module.
The main result of the paper is that $A$ is finitely presented (resp. finitely generated) if and only if $B$ is finitely presented (resp. finitely generated).
As corollaries we obtain: a subring of finite index in a finitely presented ring is finitely presented; 
a subalgebra of finite co-dimension in a finitely presented algebra over a field
is finitely presented (already shown by Voden in 2009). 
We also discuss the role of the Noetherian assumption on $K$, and show that
for finite generation it can be replaced by a weaker condition that the module $A/B$ be finitely presented.
Finally, we demonstrate that the results do not readily extend to non-associative algebras, by exhibiting an ideal of co-dimension $1$ of the free Lie algebra of rank 2 which is not finitely generated as a Lie algebra.
\end{abstract}

\maketitle

\section{The results}
\label{sec-intro}

Throughout this paper, $K$ will be a commutative ring with $1$.
A \emph{$K$-algebra} is a structure $A$ which is simultaneously a ring (not necessarily commutative or with $1$) and a $K$-module, such that the ring multiplication is $K$-bilinear.
Thus, every ring is a $\mathbb{Z}$-algebra, and the classical algebras are the special case where $K$ is a field. All rings and algebras throughout will be associative.

The main result proved in this paper is the following:

\begin{thm}
\label{thm-i1}
 Let $A$ be a $K$-algebra (not necessarily commutative or with $1$) over a commutative Noetherian ring
 $K$ with $1$, and let $B$ be a subalgebra of $A$ such that $A/B$ is a finitely generated $K$-module.

 Then $A$ is finitely presented as a $K$-algebra  if and only if $B$ is finitely presented as a $K$-algebra.
\end{thm}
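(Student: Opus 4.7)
My plan is to prove the two implications separately. The upward direction ``$B$ f.p.\ $\Rightarrow A$ f.p.'' is a routine extension of a presentation; the downward direction ``$A$ f.p.\ $\Rightarrow B$ f.p.'' is the main content, and my strategy is to reduce to the case where the subalgebra is in fact a two-sided ideal.

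For the upward direction, fix a finite presentation $B = \K{Y \mid S}$ and elements $t_1,\dots,t_n \in A$ whose images generate $A/B$ as a $K$-module, so that $M := \sum K t_i$ satisfies $A = B + M$. Take $Y \cup T$ as generators of $A$, with relations $S$ together with: (i) for each pair $s,s' \in Y \cup T$ involving at least one $t$-factor, a fixed decomposition $ss' = w_{s,s'} + \sum k_l t_l$ where $w_{s,s'}$ is a chosen $Y$-word; and (ii) for each element of a finite $K$-generating set of the submodule $B \cap M$ --- finite because $K$ is Noetherian and $M$ is f.g.\ --- a relation equating a $Y$-word with its $K$-linear expression in the $t_i$. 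A standard normal-form argument, reducing any word in $Y \cup T$ to the shape $b + \sum k_i t_i$, then shows this presents $A$.

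For the downward direction, introduce the \emph{core}
\[
L \ := \ B \ \cap \ \{a \in A : aA \subseteq B,\ Aa \subseteq B\},
\]
the largest two-sided ideal of $A$ contained in $B$. The two-sided action of $A$ on $A/B$ gives a $K$-algebra map $A \to \mathrm{End}_K(A/B) \times \mathrm{End}_K(A/B)^{\mathrm{op}}$, and because $A/B$ is f.g.\ over Noetherian $K$ the codomain is a Noetherian $K$-module; hence its image is f.g.\ as a $K$-module, and combined with $A/B$ itself f.g.\ this forces $A/L$ to be f.g.\ as a $K$-module. The task then reduces to showing: a two-sided ideal $L$ of a f.p.\ $K$-algebra $A$ with $A/L$ f.g.\ as $K$-module is itself f.p.\ as $K$-algebra. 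I would attack this first for finite generation: take finite $X$ generating $A$ and lifts $u_1,\dots,u_m \in A$ of $K$-generators of $A/L$, so $A = L + \sum K u_i$; decompose each of $x$, $u_i u_j$, $u_i x$, $x u_i$ into an $L$-part plus a $K$-linear combination of the $u_k$, collect these $L$-parts into a finite set $Y_L \subseteq L$, and adjoin a finite $K$-generating set of the f.g.\ module $L \cap \sum K u_i$. The key claim $\K{Y_L} = L$ is proved by inducting on word length to show $\K{Y_L}\cdot u_i,\, u_i\cdot \K{Y_L} \subseteq \K{Y_L} + \sum K u_k$; the decisive point is that $L$ is an \emph{ideal}, so both sides of these inclusions land in $L$, forcing the $\sum K u_i$-residue into $L \cap \sum K u_i \subseteq \K{Y_L}$ and closing the induction. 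Finite presentation of $L$ then follows by rewriting each defining relation of $A$ through the chosen decompositions, with Noetherianness controlling the resulting syzygies. Pulling back from $L$ to $B$ is straightforward since $B/L$ is a f.g.\ $K$-submodule of the Noetherian module $A/L$: adjoining lifts of its generators to a finite presentation of $L$, along with finitely many relations for their mutual products and their products with $L$-generators, yields a finite presentation of $B$.

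The principal obstacle is the closure argument in the ideal case, which succeeds precisely because $L$ is an ideal rather than merely a subalgebra; this is what motivates the passage through the core $L$ rather than attacking $B$ directly.
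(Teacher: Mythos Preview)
Your overall architecture---reduce to a two-sided ideal, handle the upward direction by extending a presentation, handle the downward direction for ideals first---matches the paper's. But there is a genuine gap at the decisive point.

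\textbf{The core argument needs repair.} The map ``$A \to \mathrm{End}_K(A/B) \times \mathrm{End}_K(A/B)^{\mathrm{op}}$'' does not exist: since $B$ is only a subalgebra, $A$ does not act on $A/B$ on either side (if $x-x'\in B$ there is no reason for $ax-ax'\in B$). Relatedly, your set $L=B\cap\{a:aA\subseteq B,\ Aa\subseteq B\}$ is not in general an ideal of $A$. The correct object is $\{a: A^1 a A^1\subseteq B\}$, and the paper obtains the finite co-rank of such an ideal by the map $h\colon B\to (A/B)^{V_1\times V_1}$, $h(x)_{u,v}=uxv+B$, with $V_1$ a finite set of coset representatives together with $1$. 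This is easily fixed, but it is worth noting.

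\textbf{The main gap: finite presentation of the ideal $L$.} Your sentence ``Finite presentation of $L$ then follows by rewriting each defining relation of $A$ through the chosen decompositions, with Noetherianness controlling the resulting syzygies'' is not a proof, and in fact hides almost all of the work. Consider the case where $A=\K{X}$ is itself free, so there are \emph{no} defining relations of $A$ to rewrite. Your argument would then produce no relations at all among the generators $Y_L$, i.e.\ would suggest $L$ is free; but a nonzero proper ideal of $\K{X}$ over an integral domain $K$ is never free as a $K$-algebra. So there are genuine relations among the $Y_L$ that do not arise from relations of $A$, and your rewriting procedure does not manufacture them. Noetherianness of $K$ gives nothing here directly: the kernel of $\K{Y_L}\to L$ lives in the non-Noetherian ring $\K{Y_L}$. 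The paper's route is to lift to the free algebra---write $A=\K{X}/I$, let $R\subseteq\K{X}$ be the preimage of $L$, and prove two things: (i) a right ideal of finite co-rank in $\K{X}$ is finitely presented (this is the substantial technical lemma, requiring an explicit normal-form/rewriting map $\varphi\colon\K{X}^1\to\K{T}+\sp{V}$ and a careful analysis of its interaction with a lifted $\K{X}^1$-module structure), and (ii) a finitely generated ideal $I$ of $\K{X}$ contained in $R$ remains finitely generated as an ideal of $R$. Your finite-generation argument for $L$ is essentially correct (and parallels the paper's Theorem~1.5), but the jump from finite generation to finite presentation is exactly the content you have not supplied.
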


Specialising to rings and algebras over fields we have the following immediate corollaries:

\begin{cor}
\label{cor-i2}
Let $R$ be a ring (not necessarily commutative or with $1$), and let $S$ be a subring such that $R/S$ is a finitely generated abelian group.

Then $R$ is finitely presented if and only if $S$ is finitely presented.
\end{cor}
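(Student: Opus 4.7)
The plan is to derive this corollary as a direct specialisation of Theorem \ref{thm-i1}, taking $K = \mathbb{Z}$. Since $\mathbb{Z}$ is a commutative Noetherian ring with $1$, and every ring (associative, not necessarily commutative or unital) carries a canonical structure of a $\mathbb{Z}$-algebra via the unique ring map $\mathbb{Z} \to \mathrm{End}(R,+)$, the hypotheses of Theorem \ref{thm-i1} will apply after we check two translations between terminology.

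First, I would observe that a finitely generated abelian group is exactly the same thing as a finitely generated $\mathbb{Z}$-module, so the assumption that $R/S$ is finitely generated as an abelian group is precisely the hypothesis that $A/B$ is finitely generated as a $K$-module when $K=\mathbb{Z}$. Second, I would note that finite presentability in the category of rings coincides with finite presentability in the category of $\mathbb{Z}$-algebras: a presentation of a ring $R$ consists of a finite set $X$ of generators together with finitely many relations, and the free ring on $X$ is the same object as the free $\mathbb{Z}$-algebra on $X$ (namely the non-commutative polynomial ring $\mathbb{Z}\langle X\rangle$ without constants, or with them, depending on the unital convention), so the two notions agree.

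With these identifications in place, the corollary is an immediate application of Theorem \ref{thm-i1}: $R$ is finitely presented as a ring if and only if it is finitely presented as a $\mathbb{Z}$-algebra, if and only if (by the theorem) $S$ is finitely presented as a $\mathbb{Z}$-algebra, if and only if $S$ is finitely presented as a ring.

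There is no real obstacle here; the only point requiring a line of care is the bookkeeping about what it means for a ring (possibly non-unital) to be ``finitely presented'' and how that matches the $\mathbb{Z}$-algebra definition used in Theorem \ref{thm-i1}, but once the standard conventions are fixed the deduction is mechanical.
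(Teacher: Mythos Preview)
Your proposal is correct and matches the paper's own treatment: the authors state Corollary~\ref{cor-i2} as an immediate specialisation of Theorem~\ref{thm-i1} with $K=\mathbb{Z}$, and your identification of rings with $\mathbb{Z}$-algebras, abelian groups with $\mathbb{Z}$-modules, and the two notions of finite presentability is exactly the intended translation.
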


\begin{cor}
\label{cor-i3}
Let $R$ be a ring (not necessarily commutative or with $1$), and let $S$ be a subring of finite index.

Then $R$ is finitely presented if and only if $S$ is finitely presented.
\end{cor}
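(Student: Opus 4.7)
The plan is to derive Corollary \ref{cor-i3} as an immediate specialisation of Corollary \ref{cor-i2}. The key observation is that the statement ``$S$ is a subring of finite index in $R$'' means, by definition, that the additive quotient group $R/S$ is a finite abelian group. Every finite abelian group is certainly finitely generated as an abelian group; for instance, one may take all of its (finitely many) elements as a generating set.

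With this observation in hand, the hypothesis of Corollary \ref{cor-i2} is fully satisfied, and the equivalence of finite presentability for $R$ and $S$ follows at once. All the substantive content is already packaged in Theorem \ref{thm-i1} (specialised to $K=\mathbb{Z}$, which yields Corollary \ref{cor-i2}); the passage from Corollary \ref{cor-i2} to Corollary \ref{cor-i3} is purely the recognition that ``finite index'' is a particular, and rather strong, case of ``the additive quotient is finitely generated''.

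Consequently there is essentially no obstacle to overcome: the proof should occupy at most two sentences. It may be worth remarking, in passing, that the statement is the natural ring-theoretic analogue of the classical Reidemeister--Schreier principle in combinatorial group theory asserting that a subgroup of finite index in a finitely presented group is itself finitely presented; this connection is already signalled by the keywords of the paper.
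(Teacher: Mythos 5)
Your proposal is correct and matches the paper's (implicit) route: the paper presents Corollary \ref{cor-i3} as an immediate specialisation of Theorem \ref{thm-i1} with $K=\mathbb{Z}$, i.e.\ of Corollary \ref{cor-i2}, exactly as you do, since finite index means the additive quotient $R/S$ is a finite, hence finitely generated, abelian group. Nothing further is needed.
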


\begin{cor}[{\cite[Corollary 5]{voden09}, \cite[Section VII]{lewin69}}]
\label{cor-i4}
Let $A$ be an algebra over a field $K$, and let $B$ be a subalgebra of finite co-dimension.

Then $A$ is finitely presented if and only if $B$ is finitely presented.
\end{cor}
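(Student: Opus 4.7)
The plan is to obtain this corollary as an immediate specialisation of Theorem~\ref{thm-i1}, and so the whole task reduces to checking that the hypotheses there are satisfied in the field setting. There is essentially no mathematical content beyond this verification, since the substantive work has already been carried out in the main theorem.

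First I would observe that a field $K$ is trivially a commutative Noetherian ring with~$1$: its only ideals are $(0)$ and $K$ itself, both principal and in particular finitely generated, so the ascending chain condition holds vacuously. Thus the assumption on the coefficient ring in Theorem~\ref{thm-i1} is in force.

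Second, I would translate the finite co-dimension hypothesis into the language of the main theorem. By definition, $B$ having finite co-dimension in $A$ means that $A/B$ is a finite-dimensional $K$-vector space. Since $K$ is a field, a $K$-module is finitely generated precisely when it is finite-dimensional as a vector space, so $A/B$ is a finitely generated $K$-module, matching the hypothesis of Theorem~\ref{thm-i1}.

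With both hypotheses verified, Theorem~\ref{thm-i1} applies directly and yields the equivalence: $A$ is finitely presented as a $K$-algebra if and only if $B$ is. The only step that could be called an obstacle is the mild terminological one just noted, namely the passage between vector-space language (co-dimension, finite-dimensionality) and module-theoretic language (finite generation of $A/B$); over a field these are literally the same notion, so there is nothing to prove.
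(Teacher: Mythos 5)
Your proposal is correct and matches the paper's treatment exactly: the corollary is stated there as an immediate specialisation of Theorem~\ref{thm-i1}, using precisely the observations that a field is a commutative Noetherian ring with $1$ and that finite co-dimension of $B$ means $A/B$ is a finitely generated $K$-module.
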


A necessary initial step in proving Theorem \ref{thm-i1} is to establish a generating set for $B$, given a generating set for $A$, which establishes a finite generation analogue of Theorem \ref{thm-i1}. 
In fact, we can prove a stronger result, without the Noetherian assumption
on $K$ (recalling that for modules over Noetherian rings, finite generation and finite presentability are equivalent properties):

\begin{thm}
\label{cor-i5}
Let $A$ be a $K$-algebra (not necessarily commutative or with $1$) over a commutative ring
 $K$ with $1$, and let $B$ be a subalgebra of $A$ such that $A/B$ is a finitely presented $K$-module.

Then $A$ is finitely generated as a $K$-algebra  if and only if $B$ is finitely generated as a $K$-algebra.
\end{thm}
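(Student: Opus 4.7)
The plan is to handle the two directions separately. For ($\Leftarrow$), the argument is immediate: since $A/B$ is finitely presented as a $K$-module it is in particular finitely generated, so I can choose a finite set $T = \{t_1,\dots,t_n\} \subseteq A$ with $A = B + KT$. If $Y$ is a finite $K$-algebra generating set of $B$, then every element of $A$ is of the form $b + \sum_i c^i t_i$ with $b$ a polynomial in $Y$ and $c^i \in K$, so $Y \cup T$ generates $A$ as a $K$-algebra.

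For the ($\Rightarrow$) direction, I would let $X$ be a finite $K$-algebra generating set of $A$; by enlarging $X$ we may assume $T \subseteq X$. The finite presentation of $A/B$ gives that the kernel of the surjection $K^n \twoheadrightarrow A/B$, $e_i \mapsto \bar t_i$, is a finitely generated $K$-submodule of $K^n$, and its generators $(r_j^i)_i$ produce finitely many elements $b_j = \sum_i r_j^i t_i \in B$ that span $B \cap KT$ as a $K$-module. I would then fix decompositions $x = \beta(x) + \sum_i \alpha^i(x) t_i$ for each $x \in X$ and $x_1 x_2 = \beta(x_1 x_2) + \sum_i \alpha^i(x_1 x_2) t_i$ for each pair $x_1, x_2 \in X$, with the remainder terms $\beta(\cdot)$ in $B$, and take as candidate generating set the finite subset
\[
Y = \{b_1,\dots,b_m\} \cup \{\beta(x) : x \in X\} \cup \{\beta(x_1 x_2) : x_1, x_2 \in X\} \subseteq B.
\]

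The central claim I would aim to prove is that $A = \langle Y \rangle + KT$, where $\langle Y \rangle$ denotes the $K$-subalgebra of $B$ generated by $Y$. Once this is established, the theorem follows easily: for any $a \in B$, writing $a = \beta + \sum_i c^i t_i$ with $\beta \in \langle Y \rangle \subseteq B$ forces $\sum_i c^i t_i = a - \beta \in B \cap KT = \sum_j K b_j \subseteq \langle Y \rangle$, whence $a \in \langle Y \rangle$, so $B = \langle Y \rangle$ is finitely generated.

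The central claim itself is to be proved by induction on the length of monomials in $X$. The hard part will be the inductive step, where rewriting a product $w' \cdot x$ into the required form produces subterms $y \cdot t_j$ (and $t_i \cdot y$) for $y \in \langle Y \rangle$, $t_i, t_j \in T$, which themselves must be reduced back to normal form $\beta + \sum c^i t_i$ with $\beta \in \langle Y \rangle$. A secondary induction on the word length of $y$ as a monomial in $Y$ reduces this to the base cases $y \in Y$, $t \in T$; these are verified by substituting the fixed decompositions and exploiting associativity identities such as $(t_i x) t_j = t_i (x t_j)$ to express the resulting $B$-parts as $K$-linear combinations of length-$\leq 2$ $\beta$-values already in $Y$, together with elements of $B \cap KT$ which are absorbed by the $b_j$'s. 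Notably, no Noetherian hypothesis on $K$ enters: the argument relies only on finite generation of the kernel of $K^n \twoheadrightarrow A/B$, that is, on finite presentation of $A/B$ as a $K$-module.
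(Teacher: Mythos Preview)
Your overall strategy matches the paper's: handle the easy direction by adjoining module generators, and for the hard direction use the finite presentation of $A/B$ to produce a finite spanning set for $B\cap KT$, together with finitely many ``remainder'' elements $\beta(w)\in B$ coming from short monomials $w$ in $X$. However, your proposed generating set is too small, and the base case of your secondary induction does not close up.

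Concretely, take $K$ a field, $A=K\langle a,b\rangle$, and let $B$ be the subalgebra spanned by all monomials of length $\geq 2$ together with $b-a$. Then $A/B\cong K$ is one-dimensional, $T=\{a\}$, $B\cap Ka=0$ (so there are no $b_j$'s), and your set is
\[
Y=\{\beta(b),\beta(a^2),\beta(ab),\beta(ba),\beta(b^2)\}=\{\,b-a,\ a^2,\ ab,\ ba,\ b^2\,\}.
\]
Under the homomorphism $\phi\colon K\langle a,b\rangle\to K[t]$, $a,b\mapsto t$, every generator in $Y$ maps into $t^2K[t^2]$, hence so does $\langle Y\rangle$, and $\langle Y\rangle+Ka$ maps into $t^2K[t^2]+Kt$. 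But $\phi(a^3)=t^3$ is not in this image, so $a^3\notin\langle Y\rangle+Ka$, even though $a^3\in B$. Thus your central claim $A=\langle Y\rangle+KT$ fails, and $\langle Y\rangle\neq B$.

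The failure is exactly where you wave your hands: verifying the base case $\beta(x_1x_2)\cdot t\in\langle Y\rangle+KT$. Expanding gives, modulo $\langle Y\rangle+KT$, a $K$-linear combination of terms $t_k\cdot\beta(x_2t)$; expanding those gives back terms of the form $\beta(t_kx_2)\cdot t_l$. This recursion cycles indefinitely among the finitely many base cases without ever landing in $\langle Y\rangle+KT$; the associativity identity you invoke does nothing to break it. The paper avoids this by also including the length-$3$ remainders $\beta(x_1x_2x_3)$ in the generating set: in the induction for degree $m>3$ each monomial is factored into blocks of length $2$ (with one block of length $3$ when $m$ is odd), so that replacing any block $r_i$ by its degree-$1$ coset representative strictly lowers the degree. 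Adding the length-$3$ $\beta$'s to your $Y$ would repair your argument along the same lines.
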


 The finite generation result has been known for rings and for algebras over fields since the 1960s from the work
 of Lewin: the non-obvious direction ($\Rightarrow$) for subrings of finite index is the main result of
 \cite{lewin67}, while for subalgebras of finite co-dimension it is implicitly present in \cite[Section VII]{lewin69}.

 Let us point out a key difference between our situation of general $K$-algebras and commutative rings.
 For $K$ a commutative Noetherian ring with $1$, the ring of polynomials $K[x_1,\dots,x_n]$ in
 commuting variables $x_1,\dots,x_n$ over $K$ is Noetherian by Hilbert's Basis Theorem.
 Every finitely generated commutative $K$-algebra with $1$ is a homomorphic image of some $K[x_1,\dots,x_n]$
 and consequently finitely presented.
 In particular being finitely generated is equivalent to being finitely presented for commutative rings
 but not in general.

The paper is organised as follows. 
The next section contains the proof of Theorem \ref{thm-i1}, with the proofs of two subsidiary results postponed to Sections \ref{sec-ext} and \ref{sec-ri}. The latter of the two, asserting that a right ideal of finite co-rank in a free $K$-algebra of finite rank is finitely presented, is the key non-trivial part of the argument.
We prove Theorem \ref{cor-i5} in Section~\ref{sec-conca}. The paper concludes with Section~\ref{sec-concb} 
in which we demonstrate that finite presentability of $A/B$ is a necessary condition for finite generation (Theorem~\ref{cor-i5}), and that $K$ being Noetherian is necessary for finite presentation (Theorem~\ref{thm-i1}).
We also discuss parallels with and differences from the Reidemeister--Schreier Theorem from combinatorial group theory, as well as the theory of subsemigroups of free semigroups.
Finally, we briefly discuss non-associative algebras, and show that the analogues of our results fail already for Lie algebras, in that there
exists an ideal of co-dimension $1$ in the free Lie algebra of rank $2$ which is not finitely generated as a Lie algebra.

\section{Proof outline}
\label{sec-outline}

\subsection{Notation and terminology} \label{subs-2.1}

As indicated at the beginning of the paper, $K$ will continue to stand for a fixed commutative ring with $1$.
For a $K$-algebra $A$ and a set $U\subseteq A$ the $K$-subalgebra generated by $U$ will be denoted by $\Sub{U}$, while the ideal generated by $U$ will be denoted by $\Id{U}$.
As our $K$-algebras do not (necessarily) have 1,
at times we will need to make use of a formal identity $1$ not belonging to the $K$-algebra under consideration.
We will then write $A^1$ for the $K$-algebra obtained by adjoining $1$ to $A$.
For a $K$-module $C$ and a set $V\subseteq C$, the submodule generated by $V$ will be denoted by $\sp{V}$.
If $D$ is a submodule of $C$ and the quotient $C/D$ is finitely generated we say that $D$ has a \emph{finite co-rank} in $C$.\footnote{One could define \emph{the} co-rank of $D$ in $C$ as the minimum cardinality of a generating set of $C/D$, but we will not need this notion.} When $K$ is a field, we will use the more customary term \emph{co-dimension}, rather than co-rank.

The \emph{free $K$-algebra} $\K{X}$ is the semigroup ring of the free semigroup $X^+$ with coefficients in $K$.
It consists of all \emph{polynomials} over $X$,
i.e. (finite) $K$-linear combinations of monomials (non-empty words) over $X$.
Note that the variables in $X$ do not commute, and $\K{X}$ does not have a (multiplicative) identity.
The free monoid over $X$, which includes the empty word $1$, will be denoted by $X^\ast:=X^+\cup\{1\}$.
The $K$-algebra $\K{X}^1$ is then isomorphic to the semigroup ring of $X^\ast$ with coefficients from $K$.

A $K$-algebra $A$ is \emph{finitely generated} if there exists a finite set $U$ such that 
$\Sub{U}=A$. Equivalently, $A$ is finitely generated if it is isomorphic to a quotient 
$\K{X}/I$ of a free $K$-algebra with $X$ finite.
If in addition $I$ is finitely generated as an ideal, $A$ is said to be \emph{finitely presented}.
When dealing with ideals, we will use the term `finitely generated' to mean `finitely generated as an ideal'. If another meaning is intended, say as a $K$-algebra, then we will say so explicitly.

\subsection{Reduction to ideals} \label{subs-2.2}

In proving Theorem \ref{thm-i1} we will prefer to work with ideals rather than subalgebras,
which is facilitated by the following result (cf. \cite[Lemma 1]{lewin67}):

\begin{lemma}
\label{le-o1}
Suppose $K$ is Noetherian, and let $A$ be a $K$-algebra with subalgebra $B$ of finite co-rank.
 Then $B$ contains an ideal $I$ of $A$ with finite co-rank in $B$
(and hence in $A$ as well). 
\end{lemma}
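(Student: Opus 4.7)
The plan is to take $I$ to be the \emph{ideal core} of $B$ in $A$, namely
\[
 I = \{a \in A \setsuchthat A^1 a A^1 \subseteq B\},
\]
which is readily checked to be the largest two-sided ideal of $A$ contained in $B$ (and indeed $I \subseteq B$, via $a = 1\cdot a\cdot 1$). The content of the lemma is then that $B/I$, or equivalently $A/I$, is finitely generated as a $K$-module.

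To extract this, I exploit the hypothesis that $A/B$ is finitely generated as a $K$-module: pick $a_1,\dots,a_n \in A$ with $A = B + Ka_1 + \cdots + Ka_n$, and adjoin $a_0 := 1 \in A^1$, so that $A^1 = B + \sum_{i=0}^{n} Ka_i$. Writing $M := A/B$, I consider the $K$-linear map
\[
 \Phi \colon A \to M^{(n+1)^2}, \quad a \mapsto (a_i a a_j + B)_{0 \le i,j \le n}.
\]
The inclusion $I \subseteq \ker \Phi$ is immediate. For the converse, I assume $a_i a a_j \in B$ for all $i,j$ and expand $xay$ for arbitrary $x,y \in A^1$ using $x = b + \sum_i k_i a_i$ and $y = b' + \sum_j \ell_j a_j$ with $b,b' \in B$ and $k_i,\ell_j \in K$. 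The resulting sum splits into four shapes of cross terms; each is placed in $B$ by invoking either $a_i a a_j \in B$ directly, or a ``boundary'' case with an index $0$ (so that $a$, $a_i a$, or $a a_j$ already lies in $B$) together with the subalgebra closure $B \cdot B \subseteq B$.

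Once $\ker \Phi = I$ is established, the Noetherian hypothesis on $K$ finishes the argument: $M^{(n+1)^2}$ is finitely generated over the Noetherian ring $K$, hence Noetherian as a $K$-module, so the submodule $A/I \cong \im \Phi$ is finitely generated, and restriction yields the desired finite co-rank of $I$ in $B$. I expect the main obstacle to be precisely the kernel computation: it demands careful bookkeeping of the four types of cross terms in the expansion of $xay$, and it is essential that the index $i = 0$ corresponding to the formal identity be included in $\Phi$, so as to cover the cases where $x$ or $y$ lies entirely in $B$.
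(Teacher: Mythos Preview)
Your proof is correct and essentially the same as the paper's: both define the map $a\mapsto (a_i a a_j + B)_{i,j}$ into $(A/B)^{(n+1)^2}$ and use the Noetherian hypothesis to bound the image. The only cosmetic difference is that the paper defines the map on $B$ and then takes the ideal generated by its kernel, whereas you identify the kernel directly as the ideal core of $B$; your expansion of $xay$ is exactly the paper's computation $A^1 H A^1 = (B+S)H(B+S)\subseteq B$ unpacked.
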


\begin{proof}
 By the assumptions, we have a finite set $V \subseteq A$ such that $B+\sp{V} = A$.
Let $V_1:=V\cup\{1\}$, and define a $K$-module homomorphism
\[ h\colon B \to (A/B)^{V_1\times V_1} \]
 by 
\[ h(x)_{u,v} := uxv+B \ (x\in B,\ u,v\in V_1), \]
where $h(x)_{u,v}$ denotes the $(u,v)$-component of the tuple $h(x)$.
 Let $I$ be the ideal of $A$ generated by the kernel $H$ of $h$. 
Note that $uHv\subseteq B$ for all $u,v\in V_1$.
Letting $S:=\sp{V_1}$, a $K$-submodule of $A^1$,
it follows that $SHS,SH,HS\subseteq B$. Now
\begin{align*}
A^1HA^1  &=(B+S)H(B+S)=BHB+B(HS)+(SH)B+SHS\\
&\subseteq BBB+BB+BB+B=B,
\end{align*}
and hence $I\subseteq B$.
Since $K$ is Noetherian and $A/B$ is a finitely generated $K$-module, the image of $h$ is also finitely generated.
Hence, by the 1st Isomorphism Theorem, $I$ has finite co-rank in $B$.
\end{proof}

It now readily follows that it is sufficient to prove Theorem \ref{thm-i1}
 in the case where $B$ is an ideal.
Indeed, let $B$ be a $K$-subalgebra of finite co-rank, and  let $I\subseteq B$ be an ideal of $A$ of finite co-rank.
Since $K$ is assumed to be Noetherian, $I$ has finite co-rank in $B$ as well, so two applications of the Theorem \ref{thm-i1}  for ideals yield: $A$ is finitely presented if and only if $I$ is finitely presented if and only if $B$ is finitely presented.

\subsection{Proof for ideals} \label{subs-2.3}

 It is easy and perhaps well known that if an ideal $B$ of finite co-rank in $A$ is a finitely generated
 (respectively, finitely presented) algebra, then so is $A$. 
 We will in fact prove, in Section \ref{sec-ext}, a more general (and equally elementary) result:

\begin{lemma}
\label{le-o2}
Let $A$ be a $K$-algebra, and let $B$ be an ideal of $A$.
If both $B$ and $A/B$ are finitely generated (respectively, finitely presented) as $K$-algebras, then
$A$ itself is finitely generated (respectively, finitely presented).
\end{lemma}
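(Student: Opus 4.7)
The plan is to handle the two assertions separately, starting with finite generation and then building on that for finite presentation.

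\textbf{Finite generation.} Pick a finite $Y\subseteq B$ with $B=\Sub{Y}$, and a finite $Z\subseteq A$ whose image in $A/B$ generates $A/B$. For any $a\in A$, write $a+B$ as a polynomial $q$ in the cosets $z+B$; then $a-q(Z)\in B=\Sub{Y}$ is expressible as a polynomial $p(Y)$, so $a=p(Y)+q(Z)\in\Sub{Y\cup Z}$.

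\textbf{Finite presentation.} Fix presentations $B\cong\K{Y}/J$ and $A/B\cong\K{Z}/L$ with $Y,Z$ finite and $J,L$ finitely generated as ideals. Set $F:=\K{Y\cup Z}$ and let $\phi\colon F\to A$ be the homomorphism sending $Y$ into $B$ and $Z$ to chosen lifts; by the first part $\phi$ is surjective. Because $B$ is an ideal, for each $(y,z)\in Y\times Z$ there exist $u_{y,z},v_{z,y}\in\K{Y}$ with $\phi(yz)=\phi(u_{y,z})$ and $\phi(zy)=\phi(v_{z,y})$; similarly for each generator $\ell$ of $L$ there exists $w_\ell\in\K{Y}$ with $\phi(\ell)=\phi(w_\ell)$. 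Let $I'$ be the ideal of $F$ generated by the finite sets (i) generators of $J$, (ii) $\{yz-u_{y,z},\,zy-v_{z,y}:y\in Y,\,z\in Z\}$, and (iii) $\{\ell-w_\ell:\ell \text{ a generator of }L\}$. Trivially $I'\subseteq\ker\phi$; the task is the reverse inclusion.

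\textbf{The core step} is a rewriting argument showing every element of $F$ is congruent modulo $I'$ to a sum $p(Y)+q(Z)$ with $p\in\K{Y}$, $q\in\K{Z}$. Indeed, any monomial containing both a $Y$-letter and a $Z$-letter has some adjacent pair lying in $Y\cdot Z$ or $Z\cdot Y$; using (ii) we replace this pair by an element of $\K{Y}$, which strictly decreases the number of $Z$-letters in that monomial. Iteration terminates with each monomial lying entirely in $\K{Y}$ or entirely in $\K{Z}$. Now suppose $p(Y)+q(Z)\in\ker\phi$. Then $\phi(q(Z))=-\phi(p(Y))\in B$, so $q+L=0$ in $\K{Z}/L\cong A/B$, i.e.\ $q\in L$. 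Writing $q$ as an ideal-combination of generators of $L$ in $\K{Z}$, applying (iii) and then the rewriting based on (ii), one replaces $q(Z)$ modulo $I'$ by some $r(Y)\in\K{Y}$. Hence $p(Y)+q(Z)\equiv(p+r)(Y)\pmod{I'}$; since $\phi((p+r)(Y))=0$ forces $(p+r)(Y)\in J$, this lies in $I'$ by (i). Thus $\ker\phi=I'$ and $A\cong F/I'$ is finitely presented.

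\textbf{The main obstacle} is the rewriting/normal-form step. What makes the rewriting terminate is the crucial feature that the substitutes $u_{y,z},v_{z,y}$ from (ii) lie in $\K{Y}$, so each rewrite strictly reduces the $Z$-letter count of the affected monomial; this is exactly what fails when $B$ is only a subalgebra rather than an ideal, and explains why the reduction to ideals (Lemma~\ref{le-o1}) is invoked earlier.
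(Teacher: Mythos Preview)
Your proof is correct and follows essentially the same strategy as the paper's: exploit the ideal property of $B$ to obtain ``absorption'' relations that rewrite mixed products into $\K{Y}$, and then finish with the finite presentation of $B$. The organisational difference is minor: you build a presentation of $A$ over the generating set $Y\cup Z$ (separate alphabets for $B$ and for $A/B$) and produce three families of relations (i)--(iii), whereas the paper starts from an arbitrary finite presentation $A=\K{X}/I$, introduces an auxiliary alphabet $Y$ with $R=\Id{\overline{Y}}=\Sub{\overline{Y}}+I$, and shows $I$ is generated by $\overline{W}$ together with the absorption relations $x\overline{y}-\overline{p}_{x,y}$, $\overline{y}x-\overline{p}_{y,x}$; the finite presentability of $A/B$ enters there only through $R$ being a finitely generated ideal, rather than via an explicit third family of relations. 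Both arguments hinge on the same inductive rewriting, and your termination measure (number of $Z$-letters in a monomial) is exactly what makes it work.
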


Note that if $A/B$ is a finitely presented $K$-module, then it is a finitely presented $K$-algebra.
Indeed, a finite presentation for $A/B$ as a $K$-algebra is then obtained from its $K$-module presentation by adding relations which express the product of any two generators as a $K$-linear combination of generators.
In the case when $K$ is Noetherian, if the $K$-module $A/B$ is finitely generated then it is also finitely presented, 
and the backwards direction of Theorem \ref{thm-i1} follows.

 The brunt of the work in proving Theorem \ref{thm-i1}  is contained in establishing the forward direction,
 from $A$ to $B$, particularly for the case where $A$ is a free $K$-algebra: 

\begin{lemma}
\label{le-o3}
If $K$ is Noetherian, then every right ideal $R$ of finite co-rank in a free $K$-algebra $\K{X}$ of finite rank is finitely presented.
\end{lemma}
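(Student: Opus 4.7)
The plan is to produce an explicit finite generating set and a finite set of defining relations for $R$, both extracted from a Schreier-type rewriting of $F^1$ relative to $R$.

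Since $F^1/R$ is a finitely generated $K$-module, one first fixes a finite $T \subseteq F^1$ with $1 \in T$ and $F^1 = \sp{T} + R$. For each pair $(t,x) \in T \times X$ one records an expression $tx = \sum_{s \in T} \alpha_{t,x,s}\, s + r_{t,x}$ with $\alpha_{t,x,s} \in K$ and $r_{t,x} \in R$. A straightforward induction on the length of monomials shows that every $f \in F^1$ admits an analogous decomposition with remainder in the right ideal $R'$ of $F$ generated by $\{r_{t,x}\}$. Applied to $f \in R$, the linear part lies in $M := R \cap \sp{T}$, which is finitely generated over the Noetherian ring $K$, say by $m_1, \dots, m_k$. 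Hence $Y := \{r_{t,x}\} \cup \{m_1,\dots,m_k\}$ generates $R$ as a right ideal of $F$.

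Next I would upgrade $Y$ to a $K$-algebra generating set by taking $Z := \{y \cdot t : y \in Y,\ t \in T\}$, which contains $Y$ since $1 \in T$. The identity
\[(y\cdot t)\cdot x = \sum_{s \in T} \alpha_{t,x,s}\,(y\cdot s) + y\cdot r_{t,x}\]
expresses $z\cdot x$ (for $z = y\cdot t \in Z$ and $x \in X$) as a $K$-linear combination of elements of $Z$ plus a product of two elements of $Y \subseteq Z$, and hence lies in $\Sub{Z}$. Induction on the length of $f \in F^1$ then yields $y\cdot f \in \Sub{Z}$ for every $y \in Y$; combined with the right-ideal generation of $R$ by $Y$ this gives $R = \Sub{Z}$, so $R$ is finitely generated as a $K$-algebra.

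To find a finite presentation, let $\pi \colon \K{\tilde Z} \twoheadrightarrow R$ be the induced surjection. The natural candidates for defining relations are: (i) for each ordered pair $(z_1, z_2) \in Z \times Z$, a relation $\tilde z_1\tilde z_2 - P_{z_1,z_2}(\tilde Z) = 0$ where $P_{z_1,z_2}$ is a polynomial expressing the product $z_1 z_2 \in R$ in terms of $Z$, obtained by iterating the rewriting above on $z_1z_2$; and (ii) for a finite generating set of the $K$-module of linear syzygies $\{(c_z)\in K^Z : \sum_z c_z z = 0 \text{ in } R\}$, the corresponding relation $\sum_z c_z \tilde z = 0$. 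Finiteness in (ii) holds because $K$ is Noetherian and $K^Z$ is free of finite rank.

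The main obstacle, and the technical heart of the proof, is showing that the relations (i) and (ii) generate $\ker \pi$ as an ideal of $\K{\tilde Z}$. The strategy is to transport the Schreier-type rewriting of $F^1$ into $\K{\tilde Z}$: starting from any $w \in \K{\tilde Z}$, one uses the relations (i) to successively decompose its products, mirroring how $\pi(w) \in F$ decomposes relative to $T$, with the relations (ii) accounting for the failure of $T$ to be $K$-linearly independent modulo $R$. Carrying this out requires careful bookkeeping to ensure that the rewriting process terminates and that no further relations are needed beyond those listed; the Noetherian hypothesis on $K$ is invoked repeatedly to guarantee that the syzygy modules arising at intermediate stages remain finitely generated.
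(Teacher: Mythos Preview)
Your overall strategy---a Schreier-type rewriting to extract generators and then relations---is precisely the paper's, and your construction of the finite $K$-algebra generating set $Z$ is correct and corresponds closely to the paper's setup (your $T$, $r_{t,x}$, $M$ and $Z=Y\cdot T$ play the roles of the paper's $B\cup\{1\}$, $\psi([t,x,1])$, $R\cap\sp{B\cup\{1\}}$ and $\overline{\psi}(T\cup U)$ respectively).

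The gap lies exactly where you locate it, but it is more than missing bookkeeping. Two concrete issues. First, your iterated rewriting does not give a \emph{normal-form map} $R\to\K{\tilde Z}$: the decomposition $f=(\text{linear in }T)+(\text{element of }R')$ on which you rely is not unique, so you have no canonical target to which every element of $\K{\tilde Z}$ can be reduced, and without that the confluence of your rewriting modulo $\Id{\text{(i)}\cup\text{(ii)}}$ cannot be checked. Second, you omit a family of relations that the argument needs: for each single generator $z\in Z$, a relation equating $\tilde z$ with its normal form. That normal form is in general a genuinely nonlinear polynomial in $\tilde Z$, so it is neither a linear syzygy (ii) nor a product relation (i); without it the induction over monomial length has no base case.

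The paper's remedy is to turn the rewriting into an honest right $\K{X}^1$-module action $\star$ on the free $K$-module with basis $T^+\cup U$ (well-defined by freeness, not by iterating a rule inside $F$), and then to set $\overline{\varphi}:=\rho\varphi|_R$ as an explicit normal-form map satisfying $\overline{\psi}\,\overline{\varphi}=\id_R$. The kernel is then exactly $\{r-\overline{\varphi}\,\overline{\psi}(r):r\in\K{Y}\}$, and three finite families suffice: $W_U$ (linear relations among the $m_i$ only), $W_{Y,Y}=\{zy-z\star\overline{\psi}(y)\}$ (the precise analogue of your (i)), and $W_Y=\{y-\overline{\varphi}\,\overline{\psi}(y)\}$ (the missing single-generator relations that seed the induction). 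One further correction: the Noetherian hypothesis is invoked exactly twice---once to make $M$ finitely generated and once to make $W_U$ finite---and not ``repeatedly at intermediate stages''.
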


This will be proved in Section \ref{sec-ri}.
The move to the general case is achieved via the following:

\begin{lemma}[cf. {\cite[Lemma 4]{voden09}}]
\label{le-o4}
Let $R$ be a subalgebra of finite co-rank in a free $K$-algebra $\K{X}$, and let $I\subseteq R$ be a
 finitely generated ideal of $\K{X}$. Then $I$ is also finitely generated as an ideal of $R$. 
\end{lemma}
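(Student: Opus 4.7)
The plan is to exploit the finite co-rank of $R$ in $\K{X}$ to decompose every element of $\K{X}^1$ into an $R^1$-part and a part from a fixed finite-rank $K$-module, and then to enlarge the given ideal generators of $I$ by a finite set of bordered products that absorbs the second part.

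First I would fix a finite set $V\subseteq\K{X}$ with $\K{X}=R+\sp{V}$; adjoining identities yields $\K{X}^1=R^1+\sp{V}$, since the extra summand $K\cdot 1$ is absorbed into $R^1$. If $\{a_1,\dots,a_n\}$ generates $I$ as an ideal of $\K{X}$, set
\[
B:=\{a_i\}\cup\{va_i\}\cup\{a_iv\}\cup\{va_iv'\},
\]
with $v,v'$ ranging over $V$ and $i$ over $\{1,\dots,n\}$. Then $B$ is finite and contained in $I$, because $I$ is a two-sided ideal of $\K{X}$ containing each $a_i$.

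Now I would take an arbitrary $x\in I$ and write $x=\sum_\alpha k_\alpha u_\alpha a_{i_\alpha}v_\alpha$ with $k_\alpha\in K$ and $u_\alpha,v_\alpha\in\K{X}^1$. Decomposing each multiplier as $u_\alpha=r_\alpha+w_\alpha$ and $v_\alpha=r'_\alpha+w'_\alpha$, with $r_\alpha,r'_\alpha\in R^1$ and $w_\alpha,w'_\alpha\in\sp{V}$, and expanding bilinearly, each summand $u_\alpha a_{i_\alpha}v_\alpha$ becomes a $K$-linear combination of products of one of four shapes: $r\cdot a_i\cdot r'$, $r\cdot a_i\cdot v$, $v\cdot a_i\cdot r'$, or $v\cdot a_i\cdot v'$, where $r,r'\in R^1$ and $v,v'\in V$. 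Rewriting the last three respectively as $r\cdot(a_iv)\cdot 1$, $1\cdot(va_i)\cdot r'$ and $1\cdot(va_iv')\cdot 1$, every such product takes the form $r\cdot b\cdot s$ with $r,s\in R^1$ and $b\in B$. Hence $x$ lies in the ideal of $R$ generated by $B$, which proves the lemma.

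The only delicate point is the bookkeeping of identity factors: taking $u_\alpha$ or $v_\alpha$ equal to $1$ in $\K{X}^1$ must translate to a legitimate expression on the $R$-side, and this is precisely what is secured by formulating the ideal of $R$ generated by $B$ via $R^1$-multipliers rather than just $R$-multipliers. Beyond this, the argument is a direct bilinear expansion and presents no serious obstacle; in particular, no Noetherian assumption on $K$ is required.
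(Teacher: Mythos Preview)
Your argument is correct and is essentially the same as the one the paper refers to: the paper does not give an independent proof but simply observes that Voden's argument \cite[Lemma~4]{voden09} for algebras over a field nowhere uses that $K$ is a field, and your bilinear expansion using the decomposition $\K{X}^1=R^1+\sp{V}$ and the finite set $B$ of bordered generators is precisely that argument. The remark that no Noetherian hypothesis on $K$ is needed is also in line with the paper's formulation.
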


\begin{proof}
Voden proves this for the case of algebras over a field, in \cite[Lemma 4]{voden09}.
 However, the proof does not use the assumption that $K$ be a field, and therefore remains valid in our more
 general setting where $K$ is any commutative ring with identity.
\end{proof}

 With the above lemmas and observations in hand, the proof of Theorem \ref{thm-i1} can be rapidly concluded. 
 Suppose that $A$ is a finitely presented $K$-algebra, and write it as $A=\K{X}/I$, with $X$ a finite set, and
 $I$ a finitely generated ideal of $\K{X}$. 
Let $B$ be an ideal of finite co-rank in $A$; as observed at the end of Subsection \ref{subs-2.2} it suffices to show that $B$ is finitely presented.
 Then $\K{X}$ has an ideal $R$ containing $I$ such that $B = R/I$, and $R$ has finite co-rank in $\K{X}$.
 Hence $R$ is finitely presented by Lemma \ref{le-o3}.
 Moreover $I$ is finitely generated as an ideal of $R$ by Lemma~\ref{le-o4}. Thus $R/I=B$ is finitely presented, as desired. 
 
So, to complete the proof of Theorem~\ref{thm-i1} 
 it remains to prove Lemmas \ref{le-o2}
 and \ref{le-o3}, which we proceed to do in the next two sections.

\section{Presentations for extensions (Lemma~\ref{le-o2})}
\label{sec-ext}

This section constitutes the proof of Lemma \ref{le-o2}: 
{\it For a $K$-algebra $A$ and an ideal $B$, if both $B$ and $A/B$ are finitely generated (resp. finitely presented) as $K$-algebras, then
$A$ itself is finitely generated (resp. finitely presented).}

The finite generation part is easy: if $A/B=\Sub{\{ x+B\setsuchthat x\in X\}}$ and
 $B=\Sub{Y}$, then $A=\langle X\cup Y\rangle$.

 For finite presentability suppose  
\[  
A= \K{X}/I,\ B = R/I 
 \]
 for a finite set $X$ and ideals $I\subseteq R$ of the free $K$-algebra $\K{X}$. Further suppose that $\K{X}/R$,
 which is isomorphic to $A/B$, and $R/I$ are finitely presented as $K$-algebras. We will show that
 $I$  is a finitely generated ideal of $\K{X}$.

 Since $R$ is finitely generated as an ideal of $\K{X}$, and since $R/I$ is a finitely generated $K$-algebra, 
 there is a finite set $Y$ and a homomorphism $\K{Y}\to \K{X}$, $t\mapsto \overline{t}$, such that 
\[ R = \Id{\overline{Y}} \text{ and } R = \Sub{ \overline{Y}} + I. \]
 Note that
\[ 
B = \frac{\Sub{ \overline{Y}} + I}{I} \cong \frac{\Sub{ \overline{Y}}}{\Sub{\overline{Y}}\cap I}. 
\]
Now consider the epimorphism
\[ 
\psi\colon \K{Y}\to \frac{\Sub{\overline{Y}}}{\Sub{ \overline{Y}}\cap I},\]
 defined by 
\[ \psi(y) := \overline{y}+\Sub{ \overline{Y}}\cap I \ \ (y\in Y). \]
 Since $B$ is finitely presented, $\ker \psi = \Id{W}$ for a finite $W\subseteq \K{Y}$ with $\overline{W}\subseteq\Sub{ \overline{Y}}\cap I$.
 
 Since $R = \Sub{ \overline{Y}} + I$ is an ideal of $\K{X}$, for all $x\in X$, $y\in Y$ there exist
 $p_{x,y},p_{y,x}\in \K{Y}$ such that the polynomials $x\overline{y}-\overline{p}_{x,y}$ and $\overline{y}x-\overline{p}_{y,x}$ both belong to $I$. 
We claim that
\begin{equation}
\label{eq-IJ}
 I = \Id{\overline{W}\cup\{ x\overline{y}-\overline{p}_{x,y}, \overline{y}x-\overline{p}_{y,x} \setsuchthat x\in X, y\in Y \} }.
\end{equation}
The inclusion ($\supseteq$) is obvious from the preceding discussion.
 For ($\subseteq$),
denote by $J$ the ideal on the right-hand side of \eqref{eq-IJ}, and let $u\in I$.
Then $u\in R$, which is equal to $\Id{\overline{Y}}$, and
 so there exist $n\in\N$, $y_i\in Y$, $r_i,s_i\in \K{X}^1$ ($i=1,\dots,n$) such that  
\[ u = r_1\overline{y}_1s_1+\dots+r_n\overline{y}_ns_n. \]
On the other hand, using the polynomials
$x\overline{y}-\overline{p}_{x,y}, \overline{y}x-\overline{p}_{y,x}\in J$,
we see that 
\[ \forall w\in \K{Y}\ \forall a,b\in \K{X}^1\ \exists v\in \K{Y}\colon a\overline{w}b - \overline{v} \in J; \]
this can be formally proved by a straightforward
induction on the complexity of polynomials $a$ and $b$. 
 In particular there exists $v\in \K{Y}$ such that $u-\overline{v}\in J$.
 But then $\overline{v}\in I$, which implies $v\in\ker \psi$ and $\overline{v}\in\Id{\overline{W}}$.
 Thus $\overline{v}$ is in
 $J$, and so is $u$. This proves that $I=J$, from which it follows that $I$ is indeed finitely generated.
 Hence $A=\K{X}/I$ is finitely presented and Lemma~\ref{le-o2} is proved.

\section{Right ideals in free $K$-algebras (Lemma \ref{le-o3})}
\label{sec-ri}

This section contains a proof of Lemma \ref{le-o3}:
{\it for a Noetherian $K$, a right ideal $R$ of finite co-rank in a  finitely generated free $K$-algebra $\K{X}$ is finitely presented.}

\subsection{Generators} \label{sec-gens}

First we need to identify a suitable (finite) set of generators for $R$.
In doing this we broadly follow Lewin \cite{lewin67}. However, our context is more general ($K$-subalgebras of finite co-rank vs. subrings of finite index), and also we need to set up supporting infrastructure of free $K$-algebras and homomorphisms in preparation for the defining relations part of the proof.
A diagram illustrating the mutual relationships between these objects is presented in Figure \ref{fig:1}, which the reader may find helpful to consult from time to time as they go through the technical argument of this section.

\begin{figure}
\begin{center}

\begin{tikzpicture}

\node (KZ) at (0,0) {$\K{Z}$};
\node (KTV) at (2,1) {$\K{T}+\sp{V}$};
\node (KTpiU) at (2,2) {$\K{T}+\sp{\pi(U)}$};

\node (KTU) at (2,4) {$\K{T}+\sp{U}$};
\node (KY) at (0,5) {$\K{Y}$};

\node (KX) at (9,1) {$\K{X}^1$};
\node (R) at (9,4) {$R$};

\begin{scope}[arrows={-{Stealth}[length=3mm,width=2mm]}]
\draw (KY)--(KZ);
\draw ([xshift=-1.5mm]KTU.south)--([xshift=-1.5mm]KTpiU.north);
\draw ([xshift=1.5mm]KTpiU.north)--([xshift=1.5mm]KTU.south);
\draw (KZ)--(KX);
\draw (KX)--(KTV);
\draw (KY)--([yshift=1mm]R.west);
\draw (R)--(KTU);
\draw (R)--(KTpiU);
\end{scope}

\node [rotate=90] at (2,1.5) {$\supseteq$};
\node [rotate=45] at (0.8,0.5) {$\supseteq$};
\node [rotate=315] at (0.6,4.55) {$\supseteq$};
\node [rotate=90,scale=1.5] at (9,2.5) {$\supseteq$};

\node at (5.3,3.7) {$\overline{\varphi}=\rho\varphi$};
\node at (6.3,2.8) {$\varphi$};
\node at (6.3,1.3) {$\varphi$};
\node at (4.2,0.2) {$\psi$};
\node at (4.2,5) {$\overline{\psi}=\psi\pi$};
\node at (-0.3,2.5) {$\pi$};
\node at (1.6,3.1) {$\pi$};
\node at (2.4,3.0) {$\rho$};

\end{tikzpicture}

\end{center}
\caption{Proof of Lemma \ref{le-o3}: algebras, modules and homomorphisms.}
\label{fig:1}
\end{figure}
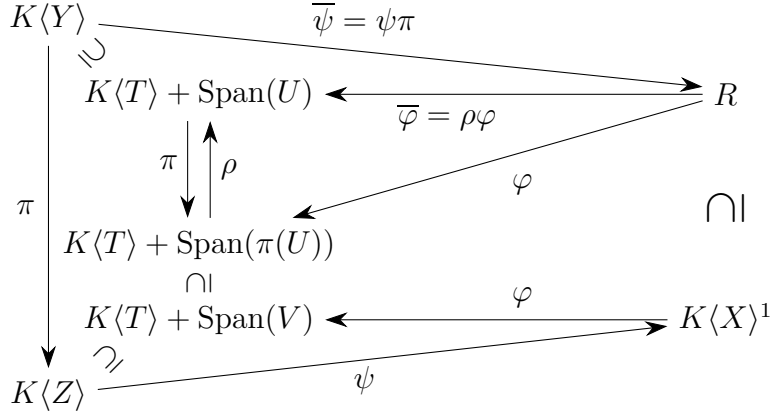

 By the finite co-rank assumption we have a finite subset $B$ of $\K{X}$ such that $\K{X} = R+\sp{B}$.
 Let $\langle . \rangle$ denote a map $\K{X}\rightarrow K^B,\ p\mapsto \G{p}{},$ that is constant on all cosets
 of $R$ in $\K{X}$, and satisfies
\begin{align*}
p-\sum_{b\in B} \G{p}{b} b \in R &\text{ for all } p\in \K{X},\\
\G{0}{b}=0 &\text{ for all } b\in B.
\end{align*}
 Here $\G{p}{b}$ denotes the $b$-th component of the tuple $\G{p}{}$. 
An alternative interpretation of $\G{p}{}$ is as follows:
pick an arbitrary collection of unique coset representatives modulo $R$, write each representative as a $K$-linear
 combination of $K$-module generators from $B$, and record the coefficients of the representative of $p$ in the tuple
 $\G{p}{}$.

 We introduce the following sets of new symbols:
\begin{align*}
 T & := \bigl\{\t{a}{x}{b} \setsuchthat x\in X,\ a,b\in B\cup\{1\} \bigr\}, \\
 V & := \bigl\{ \vg{b} \setsuchthat b\in B\cup\{1\} \bigr\}, \\
 Z & := T\cup V,
\end{align*}
 and define a homomorphism $\psi\colon \K{Z} \to \K{X}^1$ by
\[
 \psi\colon \t{a}{x}{b}\mapsto \Bigl(ax-\sum_{c\in B} \G{ax}{c} c\Bigr)b \text{ and } 
 \vg{b}\mapsto b.
\]
 Since $R$ is a right ideal,
\begin{equation} \label{eq:psiT}
 \psi(T) \subseteq R.
\end{equation}

\begin{lemma} \label{le:FX1module}
 The free $K$-module $\K{T} + \sp{V}$ is a right $\K{X}^1$-module under the following action
$(s,x)\mapsto s*x$
 of $X$ on its
 $K$-basis $T^+\cup V$: 
 for $r\in T^*,\t{a}{y}{b}\in T,\vg{b}\in V$, and $x\in X$ let
\begin{align}
\label{eq:*1}
 \bigl(r\t{a}{y}{b}\bigr)*x & := r\Bigl(\t{a}{y}{1}\t{b}{x}{1}+\sum_{c\in B} \G{bx}{c} \t{a}{y}{c}\Bigr), \\
\label{eq:*2}
 \vg{b}*x & := \t{b}{x}{1}+\sum_{c\in B} \G{bx}{c} \vg{c}.
\end{align} 
\end{lemma}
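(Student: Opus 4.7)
The plan is to build the right $\K{X}^1$-module structure on $M := \K{T} + \sp{V}$ by extending the action of $X$ on the $K$-basis $T^+ \cup V$ in two stages: first $K$-linearly in the left argument, producing $K$-module endomorphisms $\phi_x$ of $M$ for each $x \in X$, and then multiplicatively and $K$-linearly in the right argument to all of $\K{X}^1$. The right-module axioms will then drop out automatically from this construction.

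I would begin by recording that $T^+ \cup V$ is a $K$-basis of $M$: since $T$ and $V$ are disjoint sets of formal symbols, the basis $T^+$ of $\K{T}$ and the basis $V$ of $\sp{V}$ are disjoint, $K$-linearly independent, and together span $M$. This justifies defining a $K$-linear endomorphism of $M$ by prescribing its values on these basis elements. For each $x \in X$, formulas \eqref{eq:*1} and \eqref{eq:*2} give such a prescription: every basis word $s = r\t{a}{y}{b} \in T^+$ factors uniquely as a prefix $r \in T^*$ followed by terminal letter $\t{a}{y}{b} \in T$ by freeness of $T^+$, and one verifies directly that the right-hand sides land in $M$ (each summand of \eqref{eq:*1} is a scalar multiple of a word in $T^+$, and \eqref{eq:*2} is visibly an element of $\K{T} + \sp{V}$). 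Extending $K$-linearly in $s$ yields a well-defined endomorphism $\phi_x \colon M \to M$ for each $x \in X$.

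Next, I would extend the family $\{\phi_x\}_{x \in X}$ to a right action of $\K{X}^1$. The cleanest route is to invoke the universal property of $\K{X}^1$ as the free $K$-algebra with identity on $X$: the function $x \mapsto \phi_x$ from $X$ to the opposite endomorphism algebra $\mathrm{End}_K(M)^{op}$ lifts uniquely to a unital $K$-algebra homomorphism $\Phi \colon \K{X}^1 \to \mathrm{End}_K(M)^{op}$, which is by definition a right $\K{X}^1$-module structure $s * p := \Phi(p)(s)$ on $M$. Equivalently and more concretely, one can extend the action of $X$ first to the free monoid $X^*$ by setting $s * 1 := s$ and $s * (x_1 \cdots x_n) := ((s * x_1) * \cdots) * x_n$, then $K$-linearly to $\K{X}^1 = K[X^*]$; the axioms $s * 1 = s$ and $s * (pq) = (s * p) * q$ then follow by $K$-bilinearity, reducing to monomials in $X^*$, where they hold by construction.

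I do not anticipate any serious obstacle: all substantive content reduces to verifying that $T^+ \cup V$ is a $K$-basis of $M$ and that the right-hand sides of \eqref{eq:*1} and \eqref{eq:*2} are elements of $M$. Both are immediate from the definitions of $T$, $V$, and the bracket map $\langle \cdot \rangle \colon \K{X} \to K^B$, and the module axioms then follow automatically from the universal property (or the explicit extension). The real work, and the only potentially subtle aspect of the set-up, lies in the care needed when applying this module later — in particular in checking that $\psi$ is compatible with the right action so that relations in $\K{Z}$ translate correctly to relations in $R$ — but that is a matter for subsequent lemmas, not for the present one.
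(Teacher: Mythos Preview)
Your proposal is correct and follows essentially the same approach as the paper: extend the action of each $x\in X$ to a $K$-endomorphism of $\K{T}+\sp{V}$ using its freeness as a $K$-module, and then extend to all of $\K{X}^1$ using its freeness as a $K$-algebra with identity. The paper's proof is a two-sentence sketch of precisely this argument; you have simply supplied the details.
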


\begin{proof}
The above action of a single $x\in X$ extends uniquely to a $K$-endomorphism on $\K{T} + \sp{V}$ by its freeness as a $K$-module.
 And then this linear action of elements of $X$ on $\K{T} + \sp{V}$ extends uniquely to an action of $\K{X}^1$ by its freeness as a $K$-algebra.
\end{proof}

 Note that 
\[ \varphi\colon \K{X}^1\to\K{T}+\sp{V},\ p\mapsto [1]*p, \]
 is a $\K{X}^1$-module homomorphism.
 We show that $\psi$ restricts to a $\K{X}^1$-module homomorphism from $\K{T} + \sp{V}$ onto $\K{X}^1$
 with right inverse $\varphi$.

\begin{lemma} \label{le:Psirp}
 For all $r\in \K{T} + \sp{V}$ and $p\in \K{X}^1$ we have 
\begin{equation} \label{eq:psirp}
 \psi(r*p) = \psi(r)p. 
\end{equation}
 Further $\psi\varphi = \id_{\K{X}^1}$, the identity map on $\K{X}^1$. 
 \end{lemma}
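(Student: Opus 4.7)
The plan is to first establish $\psi(r*p) = \psi(r)p$ and then derive $\psi\varphi = \id_{\K{X}^1}$ as an immediate corollary. Because $\psi$ is $K$-linear, $*$ is $K$-bilinear, and $X^*$ is a $K$-basis of $\K{X}^1$, the identity $\psi(r*p) = \psi(r)p$ extends by $K$-linearity in $p$ from monomials $p\in X^*$. The case $p=1$ is trivial, and an induction on word length reduces the remainder to the single-letter identity $\psi(r*x) = \psi(r)x$ for all $r\in\K{T}+\sp{V}$ and all $x\in X$: for $p=qx$, one has $r*(qx) = (r*q)*x$ since $*$ is a right $\K{X}^1$-action, whence $\psi(r*(qx)) = \psi((r*q)*x) = \psi(r*q)\cdot x = \psi(r)qx$, using the single-letter identity applied to $r*q$ and the inductive hypothesis applied to $(r,q)$.

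Next, I would establish the single-letter identity. By $K$-linearity of $*$ in its first argument, it suffices to check it for $r$ ranging over the $K$-basis $T^+\cup V$. If $r=\vg{b}$ with $b\in B\cup\{1\}$, then expanding $r*x$ via (\ref{eq:*2}) and applying $\psi$ yields
\[ \psi(r*x) = \Bigl(bx - \sum_{c\in B}\G{bx}{c}c\Bigr) + \sum_{c\in B}\G{bx}{c}c = bx = \psi(r)x. \]
If $r=s\,\t{a}{y}{b}$ with $s\in T^*$, I would set $\alpha := ay - \sum_{c\in B}\G{ay}{c}c$, so that $\psi(\t{a}{y}{c}) = \alpha c$ for every $c\in B\cup\{1\}$. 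Expanding $r*x$ via (\ref{eq:*1}) and applying $\psi$ then gives
\[ \psi(r*x) = \psi(s)\Bigl(\alpha\bigl(bx - \sum_{c\in B}\G{bx}{c}c\bigr) + \sum_{c\in B}\G{bx}{c}\alpha c\Bigr) = \psi(s)\alpha b x = \psi(r)x, \]
the two inner sums cancelling.

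Once the first part is proved, $\psi\varphi = \id_{\K{X}^1}$ is immediate: for any $p\in\K{X}^1$,
\[ \psi(\varphi(p)) = \psi([1]*p) = \psi(\vg{1})\cdot p = 1\cdot p = p. \]

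I anticipate no serious obstacle. The argument is essentially a routine unwinding of definitions; the only mildly delicate point is the cancellation in the second case above, which is exactly the design feature of the action formula (\ref{eq:*1}): the correction term $\sum_{c\in B}\G{bx}{c}\t{a}{y}{c}$ is built into the definition of $*$ precisely so as to absorb the $\sum_{c\in B}\G{bx}{c}c$ that $\psi$ produces when applied to $\t{b}{x}{1}$, thereby making $*$ compatible with $\psi$ and the ambient multiplication in $\K{X}^1$.
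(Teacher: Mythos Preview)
Your proof is correct and follows essentially the same approach as the paper's own proof: reduce by $K$-linearity to monomials, handle the single-letter case $p=x$ by explicit computation on the basis $T^+\cup V$ (with the same cancellation you highlight), do the inductive step via $r*(qx)=(r*q)*x$, and deduce $\psi\varphi=\id_{\K{X}^1}$ by specializing to $r=\vg{1}$. The only difference is cosmetic ordering --- you state the inductive step before verifying the base case --- but the mathematical content is identical.
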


\begin{proof}
 Since $*$, $\psi$, and the multiplication in $\K{X}^1$ are all $K$-linear, it suffices to prove~\eqref{eq:psirp}
 for monomials $r\in T^+\cup V$ and $p\in X^\ast$. We use induction on the length of $p$. For $p=1$ there is nothing
 to prove. For $p=x$ in $X$ apply the homomorphism $\psi$ to the defining equalities for $*$ in
 Lemma~\ref{le:FX1module}. Starting with the shorter second one, let $\vg{b}\in V$. Then
\begin{align*}
 \psi\bigl(\vg{b}*x\bigr) & = \psi\Bigl(\t{b}{x}{1}+\sum_{c\in B} \G{bx}{c} \vg{c}\Bigr) \\
 & = \Bigl(bx-\sum_{c\in B} \G{bx}{c} c\Bigr)1 + \sum_{c\in B} \G{bx}{c} c \\
 & = bx \\
 & = \psi\bigl(\vg{b}\bigr) x.
\end{align*}
 Similarly, for \eqref{eq:*1} let $r\in T^*$, $\t{a}{y}{b}\in T$. Then
\begin{align*}
 & \psi\bigl((r\t{a}{y}{b})*x\bigr) \\
= & \psi(r)\, \psi\Bigl(\t{a}{y}{1}\t{b}{x}{1}+\sum_{c\in B} \G{bx}{c} \t{a}{y}{c}\Bigr) \\
= & \psi(r)\, \Bigl( \bigl(ay-\sum_{c\in B} \G{ay}{c} c\bigr) \bigl(bx-\sum_{c\in B} \G{bx}{c} c\bigr)+\sum_{c\in B} \G{bx}{c}\bigl(ay-\sum_{d\in B} \G{ay}{d} d\bigr)c \Bigr) \\
= &\psi(r)\, \Bigl( \bigl(ay-\sum_{c\in B} \G{ay}{c} c\bigr)bx - \bigl(ay-\sum_{c\in B} \G{ay}{c} c\bigr)\bigl(\sum_{c\in B} \G{bx}{c} c\bigr)\\
& \hspace{75mm} +\bigl(ay-\sum_{d\in B} \G{ay}{d} d\bigr)\bigl(\sum_{c\in B} \G{bx}{c}c\bigr)\Bigr) \\
= &\psi(r)\, \bigl(ay-\sum_{c\in B} \G{ay}{c} c\bigr)bx \\
= & \psi\bigl(r\t{a}{y}{b}\bigr)x.
\end{align*}
 This concludes the base case of our induction. For the inductive step, let $p = qx$ with $q\in X^+$, $x\in X$, and let $r\in  T^+\cup V$.
 Using module properties and the induction assumption we obtain
\[ \psi(r*(qx)) = \psi((r*q)*x) = \psi(r*q)x = \psi(r)qx, \]
completing the proof of~\eqref{eq:psirp}.
 The seconds assertion follows immediately from~\eqref{eq:psirp} by setting $r=[1]$.
\end{proof}

At first glance, the significance of the second assertion in Lemma \ref{le:Psirp} may be unclear: it effectively
 establishes (the image of) $Z$ as an alternative generating set for the free $K$-algebra $\K{X}^1$. However,
 it achieves two further things at the same time. Firstly, it establishes a useful set of `normal forms'
 $\varphi(\K{X}^1) \subseteq \K{T}+\sp{V}$ under this
 generating set, where the generators from $V$ appear only in $K$-linear combinations. And secondly by~\eqref{eq:psiT},
 the remaining generators $T$ all actually represent elements of the right ideal $R$ for which we are trying to find
 generators and defining relations.

To move from this generating set for $\K{X}^1$ towards the desired generating set for $R$,
note that $\sp{V}$ is a finitely generated (free) $K$-module,
and that it is therefore Noetherian because $K$ is Noetherian.
It follows that the submodule $\psi^{-1}(R)\cap \sp{V}$ is also finitely generated.
Hence there exists a finite set $U$ and a mapping $\pi\colon U\rightarrow \sp{V}$, such that
 $\sp{\pi(U)}=\psi^{-1}(R)\cap \sp{V}$.
Since $\K{T}\subseteq \psi^{-1}(R)$, 
 the modular law implies
\begin{equation}
\label{eq:U}
\psi^{-1}(R) \cap \bigl(\K{T} + \sp{V}\bigr) = \K{T} + \sp{\pi(U)}.
\end{equation}
 As the intersection of $\K{X}^1$-modules, $\K{T} + \sp{\pi(U)}$ is a $\K{X}^1$-module as well.

 Let $Y:=T\cup U$, and extend the mapping $\pi$ to a $K$-algebra homomorphism $\K{Y}\rightarrow \K{Z}$ by setting
 $\pi(t):=t$ for all $t\in T$. For
\[ \overline{\psi}:=\psi\pi\colon \K{Y}\rightarrow \K{X}^1, \]
 it follows from \eqref{eq:U} that $\overline{\psi}(\K{Y})=R$.
In other words, $Y$ is (a pre-image of) a finite generating set of $R$, and it is over this generating set that we will write down a finite presentation for $R$ in the next subsection.

\subsection{Defining relations} \label{sec-rels}

 We seek to find a finite set of defining relations for the right ideal $R$ with respect to the generating set $Y$
 and epimorphism $\overline{\psi}$; this amounts to showing that $I:=\ker\overline{\psi}$ is a finitely generated
 ideal of $\K{Y}$.

One might be tempted to actually view $R$ as a homomorphic image of the $K$-subalgebra $\Sub{T\cup\pi(U)}$ of $\K{Z}$ via the homomorphism $\psi$.
The impediment to this approach is that this subalgebra need not be free, and it is not clear whether it has to be finitely presented. In fact, even the $K$-submodule $\sp{\pi(U)}$ need not be free on the face of it. However, because of the assumption on $K$ being Noetherian,
it has to be finitely presented, and this will play a crucial role in what follows.

To begin, we lift the $\K{X}^1$-module structure from $\K{T}+\sp{\pi(U)}$ to the free module $\K{T}+\sp{U} \subseteq \K{Y}$.
 Let 
\[ \rho\colon \K{T}+\sp{\pi(U)} \to \K{T}+\sp{U} \]
be any map (not necessarily a homomorphism) such that
\begin{equation}
\label{eq:pirho}
\pi\rho = \id_{\K{T}+\sp{\pi(U)}}.
\end{equation}
Since $\pi$ acts as the identity mapping on $\K{T}$, so must $\rho$ as well.
  
\begin{lemma} \label{le:FX1module2}
 The free $K$-module $\K{T} + \sp{U}$ is a right $\K{X}^1$-module via
\[ r\star x := \rho(\pi(r)*x) \text{ for all } r \in T^+\cup U,\ x\in X. \]
\end{lemma}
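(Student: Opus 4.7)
The approach mirrors the proof of Lemma~\ref{le:FX1module}. First I would note that $\K{T}+\sp{U}$ is indeed a free $K$-module with basis $T^+\cup U$, since $T^+$ and $U$ are disjoint subsets of the free $K$-basis $Y^+$ of $\K{Y}$. Next, the definition $r\star x := \rho(\pi(r)*x)$ makes sense for each $r\in T^+\cup U$ and $x\in X$: since $\pi$ restricts to the identity on $T$ and maps $U$ into $\sp{\pi(U)}$, the element $\pi(r)$ lies in $\K{T}+\sp{\pi(U)}$; by~\eqref{eq:U} this submodule is closed under the $\K{X}^1$-action on $\K{T}+\sp{V}$, so $\pi(r)*x$ lies again in $\K{T}+\sp{\pi(U)}$, which is precisely the domain of $\rho$.

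From here the argument proceeds in two extension steps, each by a universal property, exactly as in Lemma~\ref{le:FX1module}. For each fixed $x\in X$, the assignment $r\mapsto\rho(\pi(r)*x)$ on the basis $T^+\cup U$ extends uniquely to a $K$-module endomorphism $\alpha_x$ of $\K{T}+\sp{U}$ by its freeness as a $K$-module. Then the map $x\mapsto\alpha_x$ from $X$ into $\mathrm{End}_K(\K{T}+\sp{U})^{\mathrm{op}}$ extends uniquely to a unital $K$-algebra homomorphism $\K{X}^1\to\mathrm{End}_K(\K{T}+\sp{U})^{\mathrm{op}}$, by the freeness of $\K{X}^1$ as the free unital $K$-algebra on $X$. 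Unpacking the opposite-ring convention, this is exactly a right $\K{X}^1$-module structure on $\K{T}+\sp{U}$, and its restriction to $(r,x)$ with $r\in T^+\cup U$ and $x\in X$ recovers the prescribed formula by construction.

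I do not anticipate any real obstacle: the statement is a straightforward existence/well-definedness claim, and the proof ought to be only a few lines. The one feature that might appear worrying is that $\rho$ is only a set-theoretic section of $\pi$, not a $K$-linear map, and so one should not expect the identity $r\star x=\rho(\pi(r)*x)$ to persist for arbitrary $r\in\K{T}+\sp{U}$; it serves purely as a prescription on the free basis, with everything else forced by $K$-linearity and the algebra-homomorphism property. The identity that is \emph{really} needed downstream, namely $\pi(r\star x)=\pi(r)*x$ for all $r$, will then follow automatically from $\pi\rho=\id$ on $\K{T}+\sp{\pi(U)}$ together with the $K$-linearity of $\pi$.
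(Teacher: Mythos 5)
Your proposal is correct and follows essentially the same route as the paper: define $\star$ on the free $K$-basis $T^+\cup U$ (noting that $\pi(r)*x$ stays in $\K{T}+\sp{\pi(U)}$, the domain of $\rho$, since that set is a $\K{X}^1$-submodule by \eqref{eq:U}), then extend first by freeness of the $K$-module and then by freeness of $\K{X}^1$ as a $K$-algebra. Your closing remarks about $\rho$ being only a set-theoretic section and about $\pi(r\star p)=\pi(r)*p$ match the paper's subsequent Lemma~\ref{le:pirp}, so nothing is missing.
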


\begin{proof}
 As for Lemma~\ref{le:FX1module}, the action $\star$ extends to a well-defined action of $\K{X}^1$ from the $K$-basis
 $T^+\cup U$ to all of $\K{T} + \sp{U}$.
\end{proof}

 By definition $*$ and $\star$ are equal on $\K{T}$.
 We next show that the restriction of $\pi$ to  $\K{T} + \sp{U}$ is a $\K{X}^1$-module homomorphism onto $\K{T}+\sp{\pi(U)}$.

\begin{lemma} \label{le:pirp}
 For all $r\in\K{T} + \sp{U}$ and $p\in\K{X}^1$, we have
\[ \pi(r\star p) = \pi(r)*p. \]
\end{lemma}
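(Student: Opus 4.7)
The plan is to reduce to the case where $r$ is a basis element and $p\in X^\ast$ is a word by $K$-linearity, and then to induct on the length of $p$. Since $\pi$ is $K$-linear and both $*$ and $\star$ are right $\K{X}^1$-module actions (hence $K$-bilinear), both sides of the asserted identity are $K$-linear in $r$ and in $p$ separately. It therefore suffices to establish $\pi(r\star p)=\pi(r)*p$ for $r$ in the distinguished $K$-basis $T^+\cup U$ of $\K{T}+\sp{U}$ and for $p$ a word in the free monoid $X^\ast$.

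Before starting the induction, I would invoke the observation (already used in the sentence following~\eqref{eq:U}) that $\K{T}+\sp{\pi(U)}$ is a $\K{X}^1$-submodule of $\K{T}+\sp{V}$ under $*$, so that $\pi(r)*x$ lies in the domain of $\rho$ for every $r\in T^+\cup U$ and every $x\in X$; this is what makes the definition $r\star x:=\rho(\pi(r)*x)$ from Lemma~\ref{le:FX1module2} sensible to begin with. The base case $p=1$ is immediate. The crucial base case $p=x\in X$ just unfolds the definition of $\star$ and applies $\pi\rho=\id_{\K{T}+\sp{\pi(U)}}$ from~\eqref{eq:pirho}:
\[
 \pi(r\star x) = \pi\bigl(\rho(\pi(r)*x)\bigr) = \pi(r)*x.
\]
I would then extend this single-letter identity by $K$-linearity in $r$ to all of $\K{T}+\sp{U}$. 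For the inductive step, take $p=qx$ with $q\in X^+$ and $x\in X$; by associativity of the two right actions, then the already-established single-letter case applied to the element $r\star q\in\K{T}+\sp{U}$, and finally the inductive hypothesis for $q$, I would chain
\[
 \pi(r\star qx) = \pi((r\star q)\star x) = \pi(r\star q)*x = (\pi(r)*q)*x = \pi(r)*qx.
\]

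I do not expect a real obstacle here: the lemma is essentially a formal consequence of $\rho$ being a section of $\pi$ combined with the $\K{X}^1$-invariance of $\K{T}+\sp{\pi(U)}$. The one place that warrants attention is the need to pass from the basis-element version of the single-letter case to its $K$-linear extension \emph{before} reusing it in the induction step, since the intermediate element $r\star q$ appearing there need not lie in $T^+\cup U$.
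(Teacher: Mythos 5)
Your proof is correct and follows essentially the same route as the paper: establish the single-letter case $\pi(r\star x)=\pi(r)*x$ for basis elements by unfolding the definition of $\star$ and applying $\pi\rho=\id$ from~\eqref{eq:pirho}, then extend by $K$-linearity (the paper leaves the induction on the length of $p$ implicit, which you spell out). Your added care about $\K{T}+\sp{\pi(U)}$ being $*$-invariant and about extending the single-letter case linearly before reusing it on $r\star q$ is a faithful elaboration, not a different argument.
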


\begin{proof}
 For $r\in T^+\cup U$ and $p=x$ of $\K{X}^1$, this is obtained by applying $\pi$ to the defining equality
 for $\star$ in Lemma~\ref{le:FX1module2} and by~\eqref{eq:pirho}. 
 Since $\pi$ is $K$-linear, the statement follows.
\end{proof}

 Now, while $\rho$ need not be a $K$-module homomorphism, 
 the following does hold for
\[ M := \ker\pi\cap \sp{U}. \]

\begin{lemma}
\label{la:rhoprops}
 For all $r,s\in \K{T}+\sp{\pi(U)}$, $p\in\K{X}^1$, we have
\[ \rho(r+s)-\rho(r)-\rho(s),\ \rho(r\ast p)-\rho(r)\star p \in M; \]
 i.e., modulo $M$, the mapping $\rho$ is a  $\K{X}^1$-module homomorphism. 
 \end{lemma}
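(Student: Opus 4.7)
The plan is to show that each of the two differences lies in $\ker\pi$ and in $\sp{U}$ separately; their intersection is $M$ by definition. Containment in $\ker\pi$ is immediate from $\pi\rho=\id$ together with Lemma~\ref{le:pirp}:
\[
\pi\bigl(\rho(r+s)-\rho(r)-\rho(s)\bigr)=(r+s)-r-s=0,
\]
\[
\pi\bigl(\rho(r\ast p)-\rho(r)\star p\bigr)=r\ast p-\pi(\rho(r))\ast p=0.
\]

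The harder task is the $\sp{U}$-containment. The key structural observation is that both $\K{T}+\sp{\pi(U)}\subseteq\K{Z}$ and $\K{T}+\sp{U}\subseteq\K{Y}$ are internal direct sums of $K$-modules, since $T\cap V=\emptyset$ in $Z$ and $T\cap U=\emptyset$ in $Y$. This gives a well-defined ``$\K{T}$-component'' of each element. From $\pi\rho=\id$ and unique decomposition one checks that $\rho$ automatically preserves $\K{T}$-components: if $r=t+v$ with $t\in\K{T}$, $v\in\sp{\pi(U)}$, and $\rho(r)=t'+u$ with $t'\in\K{T}$, $u\in\sp{U}$, then $\pi\rho(r)=t'+\pi(u)=r$ forces $t'=t$. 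Next, by induction on the length of a monomial $p\in X^\ast$, I would establish the parallel assertion for $\star$: for every $r'\in\K{T}+\sp{U}$ and $p\in\K{X}^1$, the $\K{T}$-component of $r'\star p$ coincides with that of $\pi(r')\ast p$. The base case $p=x\in X$ uses the defining formula $u\star x=\rho(\pi(u)\ast x)$ for $u\in U$ together with the $\K{T}$-preservation of $\rho$ just established; the inductive step uses Lemma~\ref{le:pirp} and the relation $r'\star(qx)=(r'\star q)\star x$. The statement is then extended $K$-linearly to all $p\in\K{X}^1$.

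From these two facts the $\sp{U}$-containments drop out by comparing $\K{T}$-components. Both $\rho(r\ast p)$ and $\rho(r)\star p$ have $\K{T}$-component equal to that of $r\ast p$ — the former by the $\K{T}$-preservation of $\rho$, the latter by the induction result combined with $\pi\rho=\id$ — so their difference lies in $\sp{U}$. Additivity is handled in the same way, since $\K{T}$-component extraction is $K$-linear and $\rho$ preserves $\K{T}$-components. Combined with the $\ker\pi$ half, both expressions land in $M$. The main obstacle I anticipate is the careful bookkeeping around the pulled-back action $\star$ and the propagation of $\K{T}$-component preservation through it; the $\ker\pi$ half, by contrast, is a formal one-line verification.
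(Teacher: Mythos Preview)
Your proof is correct, but it is considerably more laborious than the paper's. The paper dispenses with the entire $\sp{U}$-containment step in one line: since $\rho$ takes values in $\K{T}+\sp{U}$ and $\star$ is an action on $\K{T}+\sp{U}$, both expressions automatically lie in $\K{T}+\sp{U}$; the paper then observes that
\[
\ker\pi\cap\bigl(\K{T}+\sp{U}\bigr)=M,
\]
so the $\ker\pi$ verification (your first two displayed lines) is already the whole argument.

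You actually have this observation in your hands --- your remark that ``$\pi\rho(r)=t'+\pi(u)=r$ forces $t'=t$'' is precisely the statement that anything in $\ker\pi\cap(\K{T}+\sp{U})$ has vanishing $\K{T}$-component, i.e.\ lies in $\sp{U}$. But instead of applying this directly to the two differences (which are in $\ker\pi$ and in $\K{T}+\sp{U}$, hence done), you re-derive the $\K{T}$-component equality for each side separately, which forces you into an induction on the length of $p$ to track how $\star$ propagates $\K{T}$-components. That induction is valid but entirely avoidable: once you know the difference is in $\ker\pi\cap(\K{T}+\sp{U})$, you are finished. The paper's route buys a two-line proof; yours buys an explicit description of how $\rho$ and $\star$ interact with the $\K{T}$-summand, which is not needed here.
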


\begin{proof}
 Applying $\pi$ to both expression on the left-hand side yields $0$ by~\eqref{eq:pirho} and Lemma~\ref{le:pirp}.
 Hence both are contained in $\ker\pi\cap\bigl(\K{T}+\sp{U}\bigr) = M$.
\end{proof}

 Next we move on to the description of a set of normal forms for the elements of $R$ in $\K{Y}$ via
\[ \overline{\varphi} := \rho\varphi|_R \colon  R \to \K{T}+\sp{U}. \]
For $p\in R$ we regard $\overline{\varphi}(p)$ as our chosen normal form for it.
At this point we have defined all the algebras, modules and homomorphisms we need for the proof, and we refer the reader once again to Figure \ref{fig:1} for a quick reference about their mutual relationships.

The mapping $\varphi$ is a $\K{X}^1$-module homomorphism by Lemma \ref{le:FX1module}.
Due to the non-linear nature of $\rho$ 
we cannot say the same about $\overline{\varphi}$, 
but, by Lemma~\ref{la:rhoprops}, we have:

\begin{lemma} \label{le:Phipq}
 For all $p,q\in R, m\in\K{X}^1$, we have
\[ \overline{\varphi}(p+q)-\overline{\varphi}(p)-\overline{\varphi}(q),\ \overline{\varphi}(pm)-\overline{\varphi}(p)\star m \in M; \]
i.e., modulo $M$, the mapping $\overline{\varphi}$ is a $\K{X}^1$-module homomorphism.
\end{lemma}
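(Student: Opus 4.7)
The plan is to reduce this statement to the already-established linearity of $\varphi$ (as a genuine $\K{X}^1$-module homomorphism), combined with the ``mod $M$'' linearity of $\rho$ provided by Lemma~\ref{la:rhoprops}.

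The first step is a small but essential verification: for every $p\in R$, the element $\varphi(p)$ lies in the domain $\K{T}+\sp{\pi(U)}$ on which Lemma~\ref{la:rhoprops} is stated. By construction $\varphi$ maps into $\K{T}+\sp{V}$, and by Lemma~\ref{le:Psirp} we have $\psi(\varphi(p))=p\in R$, so $\varphi(p)\in\psi^{-1}(R)\cap(\K{T}+\sp{V})$. Equation~\eqref{eq:U} then identifies this intersection with $\K{T}+\sp{\pi(U)}$, as required. With this in hand, $\rho$ may be legitimately applied to $\varphi(p)$ and to $\varphi(p)\ast m$ for any $m\in\K{X}^1$.

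Now for the two congruences. For additivity, use that $\varphi$ is $K$-linear to rewrite
\[
\overline{\varphi}(p+q)-\overline{\varphi}(p)-\overline{\varphi}(q)
=\rho(\varphi(p)+\varphi(q))-\rho(\varphi(p))-\rho(\varphi(q)),
\]
which by the first clause of Lemma~\ref{la:rhoprops} (applied to $r=\varphi(p)$, $s=\varphi(q)$) lies in $M$. For the module action, use that $\varphi$ is a $\K{X}^1$-module homomorphism to write $\varphi(pm)=\varphi(p)\ast m$, giving
\[
\overline{\varphi}(pm)-\overline{\varphi}(p)\star m
=\rho(\varphi(p)\ast m)-\rho(\varphi(p))\star m,
\]
and the second clause of Lemma~\ref{la:rhoprops} (with $r=\varphi(p)$) delivers the required membership in $M$.

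There is essentially no obstacle here: the content of the lemma is that the only failure of $\overline{\varphi}$ to be a module homomorphism comes from the non-linearity of $\rho$, and that failure has already been controlled by Lemma~\ref{la:rhoprops}. The only point demanding a moment's care is checking that $\varphi(R)\subseteq \K{T}+\sp{\pi(U)}$ so that Lemma~\ref{la:rhoprops} applies; everything else is a direct substitution.
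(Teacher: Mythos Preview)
Your proof is correct and follows exactly the same approach as the paper, which simply invokes Lemma~\ref{la:rhoprops} together with the fact that $\varphi$ is a $\K{X}^1$-module homomorphism. Your explicit verification that $\varphi(R)\subseteq\K{T}+\sp{\pi(U)}$ via~\eqref{eq:U} is a detail the paper leaves implicit (it is needed already for $\overline{\varphi}=\rho\varphi|_R$ to be well-defined), so your write-up is in fact more complete.
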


 While $\overline{\psi}$ is a left inverse for $\overline{\varphi}$,
 composing the other way round yields an alternative description for $I=\ker{\overline{\psi}}$:

\begin{lemma} \label{le:kerPsi}
 $\overline{\psi}\overline{\varphi} = \id_R$ and $I = \{r-\overline{\varphi}\overline{\psi}(r)\setsuchthat r\in\K{Y} \}$.
\end{lemma}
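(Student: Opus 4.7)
The plan is to prove both assertions by unwinding the definitions of $\overline{\psi} = \psi\pi$ and $\overline{\varphi} = \rho\varphi|_R$, and invoking the two key identities $\psi\varphi = \id_{\K{X}^1}$ (Lemma~\ref{le:Psirp}) and $\pi\rho = \id_{\K{T}+\sp{\pi(U)}}$ (equation~\eqref{eq:pirho}). The only non-mechanical step is to recognise that $\varphi$ actually maps $R$ into the submodule $\K{T}+\sp{\pi(U)}$ on which $\pi\rho$ is the identity.

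For the first assertion, I take $p \in R$ and expand $\overline{\psi}\overline{\varphi}(p) = \psi\pi\rho\varphi(p)$. Since $\psi(\varphi(p)) = p \in R$ by Lemma~\ref{le:Psirp}, and $\varphi(p) \in \K{T}+\sp{V}$ by construction of $\varphi$, equation~\eqref{eq:U} gives $\varphi(p) \in \psi^{-1}(R) \cap (\K{T}+\sp{V}) = \K{T}+\sp{\pi(U)}$. Hence $\pi\rho\varphi(p) = \varphi(p)$, and so $\overline{\psi}\overline{\varphi}(p) = \psi\varphi(p) = p$.

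For the second assertion I prove the two inclusions separately. For ($\supseteq$), given any $r \in \K{Y}$, the element $\overline{\psi}(r)$ lies in $R$, so by the first assertion $\overline{\psi}(\overline{\varphi}(\overline{\psi}(r))) = \overline{\psi}(r)$; subtracting yields $\overline{\psi}(r - \overline{\varphi}\overline{\psi}(r)) = 0$, whence $r - \overline{\varphi}\overline{\psi}(r) \in I$. For ($\subseteq$), I take $r \in I$, so $\overline{\psi}(r) = 0$. Since $\varphi$ is $K$-linear we have $\varphi(0) = 0$, and since $\rho$ has been chosen to act as the identity on $\K{T}$ (which contains $0$), we get $\overline{\varphi}(0) = \rho(\varphi(0)) = \rho(0) = 0$. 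Therefore $r - \overline{\varphi}\overline{\psi}(r) = r - \overline{\varphi}(0) = r$, exhibiting $r$ as an element of the right-hand side.

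Overall the argument is essentially routine bookkeeping. The one substantive input is the containment $\varphi(R) \subseteq \K{T}+\sp{\pi(U)}$, which is precisely what~\eqref{eq:U} was set up to provide; after that the identities fall out immediately. A minor but genuine technical detail is $\overline{\varphi}(0) = 0$, which is needed to conclude the second inclusion on the nose and which is covered by the normalisation $\rho|_{\K{T}} = \id$ arranged just after~\eqref{eq:pirho}.
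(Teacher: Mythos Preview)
Your proof is correct and follows the same approach as the paper's: the first assertion via $\pi\rho=\id$ and $\psi\varphi=\id$, and the second by applying the first to $\overline{\psi}(r)$ for one inclusion and observing $\overline{\varphi}(0)=0$ for the other. If anything you are more careful than the paper, explicitly invoking~\eqref{eq:U} to justify that $\varphi(p)$ lies in the domain of $\rho$, and explicitly checking $\overline{\varphi}(0)=0$ via $\rho|_{\K{T}}=\id$, both of which the paper leaves implicit.
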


\begin{proof}
 The first assertion follows from~\eqref{eq:pirho} and Lemma \ref{le:Psirp}.
 For the second, let $r\in\K{Y}$ to obtain $\overline{\psi}\overline{\varphi}\overline{\psi}(r)=\overline{\psi}(r)$.
 Hence $r-\overline{\varphi}\overline{\psi}(r) \in I$.
 Conversely, if $r\in I$, then $r=r-\overline{\varphi}\overline{\psi}(r)$ is also in the set on the right-hand side.
\end{proof}

This description finally leads us to the desired finite generating set for $I$ as follows.
 Since $K$ is Noetherian, the $K$-module $\sp{\pi(U)}$ is finitely presented. Hence there exists a finite set
 $W_U\subseteq \sp{U}$ such that
\[ \sp{W_U} = M. \]
Define two further finite subsets of $\K{Y}$ as follows:
\begin{align*}
W_Y &:= \bigl\{ y-\overline{\varphi}\overline{\psi}(y)\setsuchthat y\in Y\bigr\},\\
W_{Y,Y} &:=\bigl\{zy-z\star \overline{\psi}(y) \setsuchthat y,z\in Y\bigr\}. 
\end{align*}

\begin{lemma} \label{le:finrel}
 $I$ is generated by $W := W_U\cup W_Y\cup W_{Y,Y}$ as an ideal of $\K{Y}$.
\end{lemma}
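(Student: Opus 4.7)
The plan is to establish $I=\Id{W}$ by two inclusions. The inclusion $(\supseteq)$ is straightforward: $W_U\subseteq M\subseteq\ker\pi$ gives $\overline{\psi}(W_U)=0$; for $W_Y$, $\overline{\psi}(y-\overline{\varphi}\,\overline{\psi}(y))=0$ by $\overline{\psi}\,\overline{\varphi}=\id_R$ from Lemma~\ref{le:kerPsi}; and for $W_{Y,Y}$, Lemmas~\ref{le:Psirp} and~\ref{le:pirp} combine to give $\overline{\psi}(z\star\overline{\psi}(y))=\psi(\pi(z)*\overline{\psi}(y))=\overline{\psi}(z)\,\overline{\psi}(y)=\overline{\psi}(zy)$.

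For the nontrivial inclusion $(\subseteq)$, I would invoke Lemma~\ref{le:kerPsi} to reduce the task to the key claim that $r\equiv \overline{\varphi}\,\overline{\psi}(r)\pmod{\Id{W}}$ for every $r\in\K{Y}$. Since $K\cdot 1\subseteq \K{X}^1$, Lemma~\ref{le:Phipq} in particular implies that $\overline{\varphi}$ is $K$-linear modulo $M=\sp{W_U}\subseteq \Id{W}$, so it suffices to verify the claim on monomials $w=y_1\cdots y_n\in Y^+$.

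I would then induct on $n$. The base case $n=1$ is immediate from $W_Y$. For the inductive step, write $w=w''y_n$ with $w''$ of length $n-1$. The induction hypothesis, together with the ideal property of $\Id{W}$, yields $w\equiv \overline{\varphi}\,\overline{\psi}(w'')\cdot y_n\pmod{\Id{W}}$. Expanding $\overline{\varphi}\,\overline{\psi}(w'')=\sum_ik_iz_i$ on the $K$-basis $T^+\cup U$ of $\K{T}+\sp{U}$, and applying $W_{Y,Y}$ to the rightmost pair of letters of each $z_iy_n$, gives $\sum_ik_iz_iy_n\equiv\overline{\varphi}\,\overline{\psi}(w'')\star\overline{\psi}(y_n)\pmod{\Id{W}}$. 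When $z_i=t_1\cdots t_k\in T^+$ has length at least $2$, formula~\eqref{eq:*1} is used to identify $(t_1\cdots t_{k-1})\cdot(t_k\star\overline{\psi}(y_n))$ with $z_i\star\overline{\psi}(y_n)$, since $\star$ on an element of $\K{T}$ only modifies the rightmost $T$-letter. Finally, Lemma~\ref{le:Phipq} yields $\overline{\varphi}\,\overline{\psi}(w'')\star\overline{\psi}(y_n)\equiv \overline{\varphi}(\overline{\psi}(w'')\overline{\psi}(y_n))=\overline{\varphi}\,\overline{\psi}(w)$ modulo $M\subseteq \Id{W}$, closing the induction.

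The main obstacle is the orchestration of the three families of relations in the inductive step. Each plays a distinct role: $W_Y$ pushes generators to their normal forms; $W_{Y,Y}$ converts ordinary $\K{Y}$-products into applications of the right $\K{X}^1$-action $\star$, so that reducing a product of two basis elements of $\K{T}+\sp{U}$ lands back in $\K{T}+\sp{U}$; and $W_U$ absorbs the failure of $\overline{\varphi}$ (and of the section $\rho$) to be genuine module homomorphisms, a failure which Lemma~\ref{la:rhoprops} identifies precisely with $M$. The subtle point that $W_{Y,Y}$ is given only for single-letter pairs is handled by the compatibility of $\star$ with the internal multiplication of $\K{T}$ noted above.
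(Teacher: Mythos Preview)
Your proposal is correct and follows essentially the same argument as the paper's own proof: both establish $W\subseteq I$ via Lemmas~\ref{le:Psirp}, \ref{le:pirp}, \ref{le:kerPsi}, and then prove $r\equiv\overline{\varphi}\,\overline{\psi}(r)\pmod{\Id{W}}$ by induction on the length of monomials, using $W_Y$ for the base case, $W_{Y,Y}$ together with the left-multiplication equivariance of $\star$ on $\K{T}$ (your appeal to~\eqref{eq:*1}) to convert products into $\star$-actions, and $W_U$ via Lemma~\ref{le:Phipq} to pass between $\overline{\varphi}\,\overline{\psi}(w'')\star\overline{\psi}(y_n)$ and $\overline{\varphi}\,\overline{\psi}(w)$. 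The only cosmetic difference is that the paper runs the chain of congruences from $\overline{\varphi}\,\overline{\psi}(ry)$ down to $ry$, whereas you run it in the opposite direction.
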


\begin{proof}
 Let $J := \Id{W}$.
 For the containment $J\subseteq I$, first recall that $W_U\subseteq\ker\pi\subseteq \ker\overline{\psi}=I$.
 Next $W_Y \subseteq I$ by Lemma~\ref{le:kerPsi}.
 Finally $W_{Y,Y} \subseteq I$ follows by applying $\overline{\psi}$ to its elements and
 using that $\overline{\psi}$ is both a $K$-algebra homomorphism and a $\K{X}^1$-module homomorphism by
 Lemmas~\ref{le:Psirp} and~\ref{le:pirp}.
 
 For the reverse containment $I\subseteq J$, by Lemma \ref{le:kerPsi}, it suffices to show that $J$ contains
 all polynomials $r-\overline{\varphi}\overline{\psi}(r)$ with $r\in\K{Y}$.
 We prove this claim by induction on the complexity of $r$. For $r=y$ in $Y$ we have a generator from $W_Y$.

 Suppose now that $r \equiv \overline{\varphi}\overline{\psi}(r) \bmod J$ for a monomial $r\in Y^*$.
 By the definition of $\overline{\varphi}$ we have $r_t\in\K{T}^1$,  $\gamma_u\in K$ such that
\[ \overline{\varphi}\overline{\psi}(r) = \sum_{t\in T} r_t t + \sum_{u\in U} \gamma_u u. \]
 For $y\in Y$ we obtain
\begin{align*}
  & \overline{\varphi}\overline{\psi}(ry) \\
  = &  \overline{\varphi}\bigl(\overline{\psi}(r)\overline{\psi}(y)\bigr) \\
 \equiv & \overline{\varphi}\overline{\psi}(r)\star \overline{\psi}(y) \mod J & \text{using generators $W_U$ and Lemma~\ref{le:Phipq}}\\
 = & \Bigl(\sum_{t\in T} r_t t + \sum_{u\in U} \gamma_u u\Bigr)\star\overline{\psi}(y) \\
 = & \sum_{t\in T} r_t (t\star\overline{\psi}(y))  + \sum_{u\in U} \gamma_u  (u\star\overline{\psi}(y)) & \text{since } \star \text{ and } * \text{ are equal on } \K{T} \\
 \equiv& \sum_{t\in T} r_t ty  + \sum_{u\in U} \gamma_u uy \mod J & \text{using generators $W_{Y,Y}$}  \\
 \equiv& ry \mod J & \text{by the induction assumption.}
\end{align*}
 Hence $r\equiv\overline{\varphi}\overline{\psi}(r)\bmod J$ for all monomials $r\in Y^*$.
 Since $\overline{\psi}$ is $K$-linear and $\overline{\varphi}$ is $K$-linear modulo $\sp{W}$ by Lemma~\ref{le:Phipq},
 the result follows for all polynomials $r\in\K{Y}$. 
\end{proof}

 Since $I$ is finitely generated by Lemma~\ref{le:finrel},
\[ \K{Y} / I \cong R \]
 is a finitely presented $K$-algebra. This concludes the proof of Lemma \ref{le-o3}.

\section{Finite generation (Theorem~\ref{cor-i5})}
\label{sec-conca}

 This section constitutes a self-contained proof of Theorem~\ref{cor-i5}:
{\it Let $A$ be a $K$-algebra (not necessarily commutative or with $1$) over a commutative ring
 $K$ with $1$, and let $B$ be a subalgebra of $A$ such that $A/B$ is a finitely presented $K$-module.
Then $A$ is finitely generated as a $K$-algebra  if and only if $B$ is finitely generated as a $K$-algebra.}

The `if' statement is easy: if $X$ generates $B$ as a $K$-algebra, and $\{y+B\setsuchthat y\in Y\}$ generates the $K$-module $A/B$, then
the set $X\cup Y$ generates $A$ as a $K$-algebra.

For the `only if' direction, begin by noting that
finite $K$-module generation of $A/B$ means that there exists a finite set $Y$ and a mapping $Y\rightarrow A$, $y\mapsto \overline{y}$, such that $A=B+\sp{\overline{Y}}$.
Since $A$ is finitely generated as a $K$-algebra, there exists a finite set $X\supseteq Y$, and a mapping $X\rightarrow A$, $x\mapsto \overline{x}$, which extends $y\mapsto \overline{y}$, and such that the induced $K$-algebra homomorphism $\K{X}\rightarrow A$, $p\mapsto \overline{p}$, is surjective.
Next we note that $\sp{\overline{Y}}/(B\cap\sp{\overline{Y}})\cong A/B$,
and then finite presentability of $A/B$ as a $K$-module implies that $B\cap\sp{\overline{Y}}$ is a finitely generated $K$-module.
Thus, there exists a finite set $Z\subseteq \sp{Y}\subseteq \K{X}$ such that $\sp{\overline{Z}}=B\cap\sp{\overline{Y}}$.

From $A=B+\sp{\overline{Y}}$ it follows that there is a mapping $\gamma\setsuchthat \K{X}\rightarrow\sp{Y}$
such that
$\overline{p-\gamma(p)}\in B$ for all $p\in \K{X}$.
Write $X^k$ ($k\in\N$) for the set of all monomials of length $k$ over $X$, and let
\[
U:=\{ p-\gamma(p)\setsuchthat p\in X\cup X^2\cup X^3\}.
\]
Finally let $V:=U\cup Z$, which is clearly a finite set. We claim that $\Sub{\overline{V}}=B$.

From the foregoing definitions we immediately have $\overline{V}\subseteq B$. So we only need to show that $B\subseteq \Sub{\overline{V}}$. In other words, we need to show
that for every $p\in \K{X}$ with $\overline{p}\in B$ we have $\overline{p}\in \Sub{\overline{V}}$.
We do this by induction on the degree $m$ of $p$.

If $m\leq 3$, write 
\[ p=\sum_{r\in X\cup X^2\cup X^3} \alpha_r r. \]
Then
\[
p=\sum_{r\in X\cup X^2\cup X^3} \alpha_rr= \sum_{r\in X\cup X^2\cup X^3} \alpha_r (r-\gamma(r))+\sum_{r\in X\cup X^2\cup X^3}  \alpha_r\gamma(r).
\]
Clearly
\[
\sum_{r\in X\cup X^2\cup X^3}  \alpha_r(r-\gamma(r))\in \Sub{U}.
\]
Further, since $\overline{p}$ and all $\overline{r-\gamma(r)}$ belong to $B$, it follows that
\[
\overline{\sum_{r\in X\cup X^2\cup X^3} \alpha_r\gamma(r)} \in B\cap \sp{\overline{Y}}=\sp{\overline{Z}}.
\]
Therefore
\[
\overline{p}\in \Sub{\overline{U}}+\sp{\overline{Z}}\subseteq \Sub{\overline{V}},
\]
as required.

Now consider the case where $m>3$, and assume inductively that the assertion holds for all polynomials of degree less than $m$.
Write
\[
p=\sum_{r\in X^m} \alpha_r r+ p^\prime,
\]
where the degree of $p^\prime$ is smaller than $m$.
Write each monomial $r\in X^m$ as $r=r_1r_2\cdots r_l$,
where each of $r_1,\dots, r_{l-1}$ has length $2$, and $r_l$ has length $2$ or $3$ 
(depending on the parity of $m$).
Recall that $Y\subseteq X$, so $\gamma(r_1),\dots,\gamma(r_l)$ are linear polynomials over $X$.
It follows that
\[
r=(r_1-\gamma(r_1))(r_2-\gamma(r_2))\cdots (r_l-\gamma(r_l))+r^\prime,
\]
where $r^\prime$ is a polynomial of degree less than $m$.
Now we have
\[
p=\sum_{r\in X^m} \alpha_r(r_1-\gamma(r_1))\cdots (r_l-\gamma(r_l))+\sum_{r\in X^m} \alpha_r r^\prime+ p^\prime.
\]
Clearly,
\[
\sum_{r\in X^m} \alpha_r(r_1-\gamma(r_1))\cdots (r_l-\gamma(r_l))\in\Sub{U},
\]
and 
\[
\overline{\sum_{r\in X^m} \alpha_r r^\prime +p^\prime}\in B.
\]
Since the degree of
\[
\sum_{r\in X^m} \alpha_r r^\prime + p^\prime
\]
is less than $m$, by induction we have
\[
\overline{\sum_{r\in X^m} \alpha_r r^\prime +p^\prime}\in \Sub{\overline{V}},
\]
and hence 
\[
\overline{p}\in \Sub{\overline{U}}+\Sub{\overline{V}}\subseteq \Sub{\overline{V}},
\]
completing the induction and the proof of Theorem~\ref{cor-i5}.

\section{A comparison with groups, semigroups and Lie algebras}
\label{sec-concb}

 Throughout this paper we considered algebras that do not necessarily have an identity.
 Note that presentations are defined with respect to free algebras, and there is a difference between
 the free $K$-algebra $\K{X}$ and the free $K$-algebra with $1$, which is $\K{X}^1$, over the same set $X$.
 However, a $K$-algebra $A$ that contains a multiplicative identity is finitely presented as an algebra (without $1$)
 if and only if it is finitely presented as an algebra with $1$. That is, $A\cong\K{X}/\Id{U}$ for some finite $X$
 and finite $U\subseteq\K{X}$ iff $A\cong\K{Y}^1/\Id{V}$ for some finite $Y$ and finite $V\subseteq\K{Y}^1$.
 We omit the straightforward but slightly technical argument. Consequently all our results from Section~\ref{sec-intro}
 hold in the formulation for $K$-algebras (rings) with $1$ as well.

 The condition that $A/B$ is a finitely presented $K$-module is essential for our finite generation result
 (Theorem~\ref{cor-i5}) and a fortiori for our finite presentation result (Theorem~\ref{thm-i1}) as the following example shows:
\begin{exa}
\label{exa-1}
Let $K:=\Z[x_1,x_2,\dots]$, the integral polynomial ring over infinitely many commuting variables.
Consider $K$ as a $K$-module, and turn it into a $K$-algebra $A$
with the trivial multiplication $ab=0$ for all $a,b\in A$.
Clearly, $A\cong \K{y}/\Id{y^2}$ is finitely presented and, in particular, finitely generated.
Let $B$ be the subalgebra of $A$ generated by $\{x_1,x_2,\dots\}$.
Then the quotient $A/B$ is a cyclic $K$-module, but not finitely presented.
Moreover $B$ itself is not finitely generated as a $K$-algebra, let alone finitely presented.
\end{exa}
 One can ask whether perhaps in Theorem~\ref{thm-i1} the Noetherian property of $K$ may be replaced by the weaker
 requirement that $A/B$ is a finitely presented $K$-module -- just like in Theorem~\ref{cor-i5}. This, however, 
 is not the case, as demonstrated by the next example:
\begin{exa}
\label{exa-2}
Let 
\[K:=\Z[x_1,x_2,\dots]/\Id{x_ix_j\setsuchthat i,j=1,2,\dots}.\]
 Considered as a $K$-algebra, $A := K$ has a multiplicative identity and consequently its multiplication is not zero.
 Moreover, $A$ is finitely presented, since  $A\cong \K{y}/\Id{y^2-y}$. 
Its ideal $B := \Id{x_1}$ consists of all integer multiples of $x_1$; that is, 
$B$ is finitely generated as a $K$-module, and also as a $K$-algebra.
It immediately follows that $A/B$ is a finitely presented $K$-module.
However, $B \cong \K{z}/\Id{z^2,x_1z, x_2z,\dots}$ is not finitely presented as a $K$-algebra, because no ideal generator $x_i z$ can be expressed in terms of the other generators.
\end{exa}

 Comparing our results to those of Lewin~\cite{lewin69} and Voden~\cite{voden09} for algebras over fields,
 one essential difference is that submodules of free modules over rings are not necessarily free.
 In contrast, subspaces of vector spaces are free, which is crucial for Lewin's proof that subalgebras of finite
 co-dimension in finitely generated free algebras over fields are finitely presented~\cite[Section VII]{lewin69}.
 For $K$ a field, our proof of Lemma~\ref{le-o3} would simplify in that $\pi\colon U\to\sp{V}$ as defined at the
 end of Section~\ref{sec-gens} is essentially the identity map, and the desired relations can be found using
 $\psi,\varphi$ within $\K{T}+\sp{V}$ without having to pass to $\overline{\psi},\overline{\varphi}$ in
 Section~\ref{sec-rels}. 

Our results can be viewed as direct analogues of those of
Reidemeister~\cite{reidemeister27} and Schreier~\cite{schreier27} from combinatorial group theory,
asserting that a subgroup of finite index in a finitely generated (resp. finitely presented) group is itself finitely generated (resp. finitely presented).
These results have over the years played a central role in the development of combinatorial group theory; see, for example,
\cite[Section II.4]{lyndon01}, and for a more historical overview 
\cite[Section II.3]{chandler82}.
It might be therefore interesting to compare 
these theorems with the results obtained here.

For example, the finite co-rank property specialised to the case of a ring $R$ and a subring $S$ requires the quotient
$R/S$ to be a finitely generated abelian group.
This appears weaker than the finite index condition in groups, where the quotient $G/H$ is merely required to be finite. One may wonder whether
 the latter condition can be weakened, at least in the case of a normal subgroup $N$, to requiring the quotient $G/N$ to be finitely generated abelian.
This, however, is known not to be sufficient to ensure preservation of finite generation or presentability.
Indeed, for the free group $F_2$ of rank $2$, its derived subgroup $F_2^\prime$ is not finitely generated as a subgroup.

Probably the strongest methodological parallel between the Reidemeister--Schreier Theorem and our result is provided by the role of the mapping $\varphi$ in Section
\ref{sec-ri}, which was used to `rewrite' 
the elements of $\K{X}$ representing the elements of the right ideal $R$ into suitable 
polynomials over $Y$.
In Magnus--Karrass--Solitar's rendering of Reidemeister--Schreier  in
\cite[Section 2.3]{mks} the analogous role is played by what they call a \emph{rewriting process} $\tau$.
A good explicit choice of these rewriting mappings is then crucial for both the group- and ring-theoretic proofs.

To continue the comparison, note that the group-theoretic proof \emph{could}
 be couched along the same lines as our proof above,  namely proving that if $N$ is a normal subgroup of finite index in a free group $F_n$ of finite rank $n$ then
\begin{enumerate}[label=(\arabic*),leftmargin=7mm]
\item[(1)] $N$ is finitely presented; and
\item[(2)]
any normal subgroup $M$ of $F_n$ which is contained in $N$ and which is finitely generated as a normal subgroup of $F_n$
is also finitely generated as a normal subgroup of $N$.
\end{enumerate}
Now, in the group case, (1) follows from the stronger result that subgroups of free groups are free (the Nielsen--Schreier Theorem).
Admittedly, sometimes this is proved as a consequence of the Reidemeister--Schreier Theorem (as, for example, in \cite{mks}), but more transparent proofs, relying only on alternative descriptions of the free group are available. For instance, it is known that a group is free if and only if it acts freely on a tree \cite[Theorem 4]{serre}.
But then it is obvious that every subgroup also acts freely on the same tree, and hence it must be free
\cite[Theorem 5]{serre}.
Statement (2) is also well known and straightforward to prove: 
if $M$ is generated as a normal subgroup of $F_n$ by a set $Y$, and if $C$ is a set of coset representatives of
$N$ in $F_n$, then $M$ is generated by $\{ c^{-1}yc\::\: y\in Y,\ c\in C\}$ as a normal subgroup of $N$.

In our proof for rings it is Lemma \ref{le-o3} corresponding to (1) that presents the greatest
 difficulties, and we have not been able to find a proof that would avoid its technical nature. 
By way of contrast, Lemma \ref{le-o4} is comparable in spirit and difficulty to (2). 
This is perhaps related to the fact that subrings of free rings are not free in general, and the finitely generated ones may even fail to be finitely presented.
 We give an explicit example to demonstrate this.

\begin{exa}
Let $S$ be the subsemigroup of $\{a,b,c\}^+$ generated by $v:=ba, w:=ba^2,x:=a^3,y:=a^2c,z:=ac$.
In~\cite[Example 4.5]{crrt} it is proved that $S$ is not finitely presented.
From the arguments given there, it follows easily that $S$ is defined by the infinite presentation
$\langle v,w,x,y,z \ |\ vx^ny=wx^nz \text{ for } n\in\N_0\rangle$.

The subring $R := \Sub{v,w,x,y,z}$ of the free ring $\ZZ{a,b,c}$ is precisely the integral semigroup ring of $S$.
 From the semigroup presentation of $S$, it is straightforward that $R\cong \ZZ{v,w,x,y,z}/I$ for the ideal
 $I$ generated by 
\[ \{ vx^ny-wx^nz \setsuchthat n\in\N_0 \}. \]
 Seeking a contradiction, suppose that $R$ is finitely presented. Then there exists $N\in\N_0$ such that
 $W := \{ vx^ny-wx^nz \setsuchthat 0\leq n\leq N\}$ also generates $I$.
 In particular, $vx^{N+1}y-wx^{N+1} z \in \Id{W}$, i.e.
\[
vx^{N+1}y-wx^{N+1} z= \sum_{i=0}^{N} p_i (vx^iy-wx^i z)q_i
\]
 for some $p_i,q_i\in\ZZ{v,w,x,y,z}^1$.
 However, none of $vx^iy$, $wx^i z$ ($i\leq N$) is a factor of $vx^{N+1}y$.
 So the monomial $vx^{N+1}y$ does not appear on the right-hand side, a contradiction. Thus $R$ is not finitely presented.
\end{exa}

 Further, it turns out that ideals and $K$-subalgebras of finite co-rank in free $K$-algebras are \emph{never}
 free when $K$ is an integral domain.

\begin{lemma}
\label{le:Iunfr}
 If $K$ is an integral domain (not necessarily Noetherian), then no proper non-trivial ideal $I$ of a free
 $K$-algebra $\K{X}$ is free as a $K$-algebra.
\end{lemma}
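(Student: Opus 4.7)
The plan is to argue by contradiction. Suppose $I$ admits a $K$-algebra isomorphism $\phi\colon \K{Y}\to I$ for some set $Y$; then $Y\ne\emptyset$ because $I\ne 0$, and some $x\in X$ lies outside $I$ because $I$ is proper and $X$ generates $\K{X}$. Fix $y\in Y$, and by slight abuse of notation also write $y$ for its image $\phi(y)\in I\subseteq \K{X}$. The goal is to derive, from the freeness of $I$ as a $K$-algebra, that $x$ must in fact belong to $I$.

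The input is the trivial associativity identity $(yx)y=y(xy)$ in $\K{X}$. Since $xy$ and $yx$ both lie in $I$, write them as $\phi(P)$ and $\phi(Q)$ for unique $P,Q\in\K{Y}$; both are non-zero because the integral domain hypothesis on $K$ makes $\K{X}$ a domain. Translating the associativity identity back through $\phi^{-1}$ yields the relation
\[ Qy = yP \quad\text{in}\quad \K{Y}. \]

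Now comes the key step, a monomial analysis in $\K{Y}$. Since $\K{Y}$ is a free $K$-module on the non-empty words $Y^+$, distinct monomials are linearly independent, so this equation forces a term-by-term match: every monomial of $Qy$ ends in the letter $y$ while every monomial of $yP$ begins with it, so both sides consist of words of shape $y\tilde u y$ with $\tilde u\in Y^*$ (possibly empty). Collecting the common coefficients into a single $\tilde Q\in\K{Y}^1$ (the scalar term coming from the case where $\tilde u$ is empty), one obtains $Q=y\tilde Q$ and $P=\tilde Q y$.

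Finally, extend $\phi$ to the unital map $\K{Y}^1\to\K{X}^1$ sending $1$ to $1$. Then $yx=\phi(Q)=y\cdot\phi(\tilde Q)$ in the domain $\K{X}^1$, and cancellation by $y\ne 0$ gives $x=\phi(\tilde Q)$. Decomposing $\tilde Q=\tilde Q_0+c\cdot 1$ with $\tilde Q_0\in\K{Y}$ and $c\in K$, the direct-sum splitting $\K{X}^1=\K{X}\oplus K\cdot 1$ together with $x\in\K{X}$ forces $c=0$, so $x=\phi(\tilde Q_0)\in I$, contradicting the choice of $x$. The main obstacle will be the monomial analysis, specifically handling the empty-word case, which is precisely what necessitates the detour through $\K{Y}^1$ rather than staying within $\K{Y}$ where no scalar term would be available.
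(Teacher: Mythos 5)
Your proof is correct and takes essentially the same route as the paper's: pull $xy$ and $yx$ back through the isomorphism, use $y(xy)=(yx)y$ to obtain $Qy=yP$ in $\K{Y}$, factor out $y$ by a monomial analysis, and cancel in the domain to force $x\in I$, contradicting properness. The only difference is a matter of care rather than method: you explicitly handle the possible constant term of $\tilde Q$ via $\K{Y}^1$ and the splitting $\K{X}^1=\K{X}\oplus K\cdot 1$, a degenerate case the paper's proof passes over when it writes $p=ry$ with $r\in\K{Y}$.
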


\begin{proof}
 Suppose that $I$ is a free $K$-algebra, say with $\K{Y}\rightarrow I$, $t\mapsto \overline{t}$, an isomorphism.
 Let $x\in X$, $y\in Y$. 
Since $I$ is an ideal, $x\overline{y},\overline{y}x\in I$. 
Hence we have $p,q\in \K{Y}$ such that
 $x\overline{y}=\overline{p}$, $\overline{y}x=\overline{q}$. 
Moreover $\overline{y}x\overline{y} = \overline{yp} = \overline{qy}$. 
Since $t\mapsto \overline{t}$ is an isomorphism, this yields $yp=qy$ in $\K{Y}$, and hence $p = ry$ 
 for some $r\in \K{Y}$. Consequently 
 $x\overline{y} = \overline{ry}$. 
Since $K$ has no zero-divisors, neither does $\K{X}$. Hence $x=\overline{r}\in I$,
 implying $I=\K{X}$, a contradiction.
\end{proof}

 Similarly, subalgebras of finite rank are generally not free.

\begin{lemma}
\label{le:Sunfr}
 If $K$ is a Noetherian integral domain, no proper $K$-subalgebra $S$ of finite co-rank in a free $K$-algebra $\K{X}$ is free.
\end{lemma}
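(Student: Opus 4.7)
Assume for contradiction that $S$ is a proper free $K$-subalgebra of $\K{X}$ of finite co-rank, with free basis $Y$. By Theorem~\ref{cor-i5}, applied with $A=\K{X}$, $B=S$ (noting that $\K{X}/S$ is finitely generated, hence finitely presented, as a $K$-module over the Noetherian ring $K$), $S$ is finitely generated as a $K$-algebra, so $Y$ is finite; fix the isomorphism $t\mapsto\bar t$ from $\K{Y}$ onto $S$.

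Apply Lemma~\ref{le-o1} (available since $K$ is Noetherian) to obtain an ideal $I$ of $\K{X}$ contained in $S$ and of finite co-rank in $S$. Then $I$ is proper in $\K{X}$ (as $I\subseteq S\subsetneq\K{X}$) and nonzero (otherwise $S=S/I$ would be finitely generated as a $K$-module, contradicting the infinite $K$-rank of the nonzero free $K$-algebra $\K{Y}$). If $I=S$, then $S$ is itself a proper nonzero ideal of $\K{X}$, and Lemma~\ref{le:Iunfr} directly contradicts our freeness assumption on $S$. So assume $I\subsetneq S$; the remaining task is to show $\bar y\in I$ for every $y\in Y$, giving $S\subseteq I$, hence $I=S$, and thus reducing to the preceding case.

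Fix $y\in Y$ and let $\tilde I\subseteq\K{Y}$ denote the preimage of $I$ under the bar map; pick any nonzero $\zeta\in\tilde I$. For every $x\in X$ the ideal property of $I$ in $\K{X}$ gives $x\bar\zeta,\,\bar\zeta x\in I\subseteq S$, so there are unique $p_x,q_x\in\tilde I$ with $x\bar\zeta=\bar{p_x}$ and $\bar\zeta x=\bar{q_x}$. Equating the two expressions for $\bar\zeta\cdot x\cdot\bar\zeta$ obtained by associating on the left and on the right, and using that the bar map is an isomorphism, yields the identity $\zeta p_x=q_x\zeta$ in $\K{Y}$. A free-algebra cancellation argument in $\K{Y}$, analogous to the step $yp=qy\Rightarrow p=ry,\,q=yr$ from the proof of Lemma~\ref{le:Iunfr}, combined with the absence of zero divisors in $\K{X}$ (forthcoming from $K$ being an integral domain), should then force $\bar y\in I$.

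The main obstacle will be this final cancellation step: in Lemma~\ref{le:Iunfr} the analogous element was a single basis generator of the (assumed-free) ideal, for which the solution of $yp=qy$ in the free algebra is elementary; in our setting $\zeta$ is a general polynomial in $\K{Y}$. One must therefore either choose $\zeta$ with a special form, exploiting that $\tilde I$ is a two-sided ideal so that $y^{n}\zeta_0 y^{n}\in\tilde I$ for any $\zeta_0\in\tilde I$ and $n\geq 0$ (forcing a controllable leading-monomial structure on $\zeta$), or establish a more general cancellation lemma in $\K{Y}$ for identities of the form $\zeta p=q\zeta$ and extract $\bar y\in I$ from it.
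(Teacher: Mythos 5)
Your frame is the same as the paper's (assume $S\cong\K{Y}$ is free, use Lemma~\ref{le-o1} to get an ideal $I\subseteq S$ of finite co-rank, then imitate the $yp=qy$ cancellation from Lemma~\ref{le:Iunfr}), but the entire content of the lemma is concentrated in the step you explicitly defer. For an arbitrary nonzero $\zeta$ in the preimage of $I$ the identity $\zeta p_x=q_x\zeta$ in $\K{Y}$ carries essentially no usable information: if $\zeta$ has self-overlapping or otherwise uncontrolled monomial structure, such equations have many solutions that do not force any factorisation, and your fallback suggestion of using elements of the form $y^{n}\zeta_0y^{n}$ does not obviously repair this, since the inner $\zeta_0$ is still arbitrary and powers of a single generator overlap with themselves. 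The paper's proof is precisely the construction of a good $\zeta$: one first observes that a cyclic subalgebra never has finite co-rank, so $|Y|>1$, picks distinct $y,z\in Y$, and sets $s_i:=y^2z^iyz$ for $i\in\N$; then, because $K$ is Noetherian and $I$ has finite co-rank, the submodule of $S/I$ generated by the classes of the $s_i$ is finitely generated, which yields an element $t=\sum_i\gamma_is_i$ in the preimage of $I$ with some $\gamma_n=1$. The pairwise non-overlapping nature of the $s_i$ is what makes the equation $pt=tq$ (where $\overline{t}x=\overline{p}$, $x\overline{t}=\overline{q}$) force $p=tp_n$, after which absence of zero divisors gives $x=\overline{p}_n$, hence $S=\K{X}$, contradicting properness. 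This is where the Noetherian hypothesis is used a second time, beyond merely invoking Lemma~\ref{le-o1}; without an argument of this type your proposal is not a proof but a reduction to the hard step.

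A secondary structural point: even granting a suitable cancellation lemma, what the identity $\zeta p_x=q_x\zeta$ naturally delivers is a factorisation of $p_x$ through $\zeta$, hence the conclusion $x\in S$ for every $x\in X$ (so $S=\K{X}$), which contradicts properness directly. Your stated endgame --- showing $\overline{y}\in I$ for every $y\in Y$, concluding $S=I$, and then re-applying Lemma~\ref{le:Iunfr} --- is both a detour and misaligned with what the cancellation produces, so it would need to be rewritten even after the gap is filled.
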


\begin{proof}
Suppose that $S$ is free, say $S\cong \K{Y}$, and let $r\mapsto \overline{r}$ be an isomorphism $\K{Y}\rightarrow S$.
Let $I$ be an ideal of finite co-rank in $\K{X}$ contained in $S$;
it exists by Lemma~\ref{le-o1}.

Note that cyclic $K$-subalgebras of free $K$-algebras never have finite co-rank, with the trivial exception
of $\K{x}$ as a $K$-subalgebra of itself. Hence we may asume that $|Y|>1$, and let
$y,z\in Y$ with $y\neq z$.
Consider the collection of monomials $s_i := y^2z^iyz$ for $i\in\N$.
 Since $K$ is Noetherian and $I$ has finite co-rank, 
 the $K$-submodule $\sp{s_1+I,s_2+I,\dots}$ of $S/I$ is finitely generated.
Hence
there exist $\gamma_i\in K$, such that all but finitely many equal to $0$, 
one of them, say $\gamma_n$, equals to $1$, and
 $t := \sum_{i\in\N} \gamma_i s_i$ such that $\overline{t}\in I$.

Now consider an arbitrary $x\in X$. Since $I$ is an ideal, we have
$\overline{t}x,x\overline{t}\in I$.
Hence there exist $p,q\in \K{Y}$ such that $\overline{t}x=\overline{p}$ and $x\overline{t}=\overline{q}$.
This implies
\begin{equation}
\label{eq-conc1}
\overline{pt}=\overline{t}x\overline{t}=\overline{tq}.
\end{equation}
Since $r\mapsto \overline{r}$ is an isomorphism, it follows that $pt=tq$ in $\K{Y}$.
 From the non-overlapping properties of $s_i$, we see that every monomial in $p$ must have some $s_i$ as prefix, and,
 likewise, every monomial in $q$ must have some $s_i$ as suffix.
Writing $p := \sum_{i\in\N}s_ip_i$ and $q := \sum_{j\in\N} q_js_j$ with $p_i,q_j\in \K{Y}^1$, from \eqref{eq-conc1} we obtain
\[
 \sum_{i,j} \gamma_j s_ip_is_j = \sum_{i,j} \gamma_i s_iq_js_j.
\]
 Again, the non-overlapping properties of the $s_i$ imply that $\gamma_j s_ip_is_j = \gamma_i s_iq_js_j$,
 and hence $\gamma_j p_i = \gamma_iq_j$, for all $i,j\in\N$. 
 Since $\gamma_n = 1$, we obtain $p_n=q_n$, and then $p_i = \gamma_i q_n=\gamma_i p_n$ for all $i\in\N$.
 Then $p = tp_n$.
But now, in $\K{X}$ we have
\[
\overline{t}x=\overline{p}=\overline{t}\,\overline{p}_n,
\]
and hence $x=\overline{p}_n$ because $K$ has no zero-divisors.
This proves that all $x\in X$ belong to $I$, and so $S=I=\K{X}$, a contradiction.
\end{proof}

Similar discussions of (non-)freeness for subalgebras of free algebras over a field
are presented in
\cite[Section 6.6]{cohn85}.

 Even though the subsemigroups of a free semigroup need not be finitely presented, at least there is an algorithmic
 procedure which decides whether any given finitely generated subsemigroup of $X^\ast$ is finitely presented or not;
 see \cite[Section 5.2]{lallement79}. It would be interesting to know whether there is an analogue for free rings:

\begin{que}
Is there an algorithm which decides for any finite subset $T$ of a free ring 
$\ZZ{X}$ whether the subring $\Sub{T}$ is finitely presented?
\end{que}

Of course, one can consider this question more generally for $K$-algebras, but presumably the answer would depend partly on the nature of $K$.

In free semigroups it is also the case 
that ideals and one-sided ideals of $X^+$ that are finitely generated as semigroups are also finitely presented (although not necessarily free); see \cite[Corollary 3.6, Theorem 4.3]{crrt}. This leads us to pose the following:

\begin{que}
Are the (one- or two-sided) ideals of a free ring that are finitely generated as rings necessarily finitely presented?
\end{que}

Finally, one might wonder whether the results of this paper can be extended to the context of non-associative algebras. The answer, however, is negative already for Lie algebras, as the following example shows. Specifically, we exhibit an ideal of co-dimension $1$ in a free Lie algebra of rank 2 which is not finitely generated as a Lie algebra.

\begin{exa}
\label{exa-lie}

Let $L(x,y)$ be the free Lie algebra on $\{x,y\}$ over a field $K$.
Note that the free Lie algebra over $\{x\}$ is isomorphic to $K$ with trivial Lie bracket. Define a homomorphism
$h$ by
\[
h\colon L(x,y)\rightarrow K,\ x\mapsto 1,\ y\mapsto 0.
\]
 Clearly $B := \ker h$ has co-dimension $1$ in $L(x,y)$. We will show that $B$ is not finitely generated
 as a Lie algebra.

First recall that the free $K$-algebra $\K{x,y}$ is the enveloping algebra of $L(x,y)$. We will identify elements
 in $L(x,y)$ with their images in $\K{x,y}$ using $[a,b] = ab-ba$. As a subalgebra of a free Lie algebra,
 $B$ is free over some set $Y$ by the Shirshov--Witt Theorem. Moreover $|Y|$ is equal to the rank of the right
 ideal $I := B \K{x,y}$ as a right $\K{x,y}$-module~\cite[Section 2.5.1]{Re:FLA}.

 We claim that 
\begin{equation}\label{eq:Ispan}
 I = \{x^n y \setsuchthat n\in\N_0 \}\ \K{x,y}.
\end{equation}
 For $\subseteq$ note that each non-trivial monomial $b\in B$, when written using Lie-brackets, must contain $y$. 
 Hence the representation of $b$ in $\K{x,y}$ is a sum of monomials  all of which
 contain $y$.

 For the converse inclusion $\supseteq$ in~\eqref{eq:Ispan} we show that $x^ny\in I$ by a straightforward
 induction on $n\in\N_0$. 
For the base case note that $y\in B\subseteq I$ by the definition of $h$. Since
\[
 B\ni [\dots [[y,\underbrace{x],x],\dots,x}_n]=x^ny+\sum_{i=1}^n (-1)^{n-i} \binom{n}{i} x^{n-i} y x^{i},
\]
 the induction assumption yields $x^ny\in I$. Thus~\eqref{eq:Ispan} is proved.

Now it is easy to see that $\{x^n y \setsuchthat n\in\N_0 \}$ is a free basis for the
$\K{x,y}$-module $I$.
Consequently $B$ is a free Lie algebra of infinite rank and is
 not finitely generated.
\end{exa}

\textbf{Acknowledgement.}
The authors are thankful to an anonymous referee of an earlier version of the paper
for their suggestions how to improve the exposition in the paper.
We are particularly grateful for the referee's enhancement of Example \ref{exa-1}, the substance of Example \ref{exa-2}, and for asking a question which led us to Theorem \ref{cor-i5} in its present form.

\def\cprime{$'$}

\end{document}